\newcommand{\F}{\mathcal{F}}
\newcommand{\RR}{\mathbb{R}}
\newcommand{\CC}{\mathbb{C}}
\newcommand{\HH}{\mathbb{H}}
\newcommand{\Z}{\mathbb{Z}}
\newcommand{\B}{\mathcal{B}}
\renewcommand{\L}{\mathcal{L}}
\newcommand{\reg}{\operatorname{reg}}
\newcommand{\Av}{\operatorname{Av}}
\newcommand{\vol}{\operatorname{vol}}
\newtheorem{theorem}{Theorem}
\newtheorem{corollary}[theorem]{Corollary}
\newtheorem{lemma}[theorem]{Lemma}
\newtheorem{proposition}[theorem]{Proposition}
\newtheorem{maintheorem}{Theorem}
\newtheorem{maincorollary}[maintheorem]{Corollary}
\theoremstyle{definition}
\newtheorem{definition}[theorem]{Definition}
\theoremstyle{remark}
\newtheorem{remark}[theorem]{Remark}
\newtheorem{example}[theorem]{Example}
\newtheorem{question}[theorem]{Question}
\title{Manifold submetries from compact homogeneous spaces}
\date{}
\author[S. Lin]{Samuel Lin}
\address{Colby College\newline
\indent Department of Mathematics\newline
\indent 4000 Mayflower Hill\newline
\indent Waterville, ME 04901, USA}
\email{samlin@colby.edu}
\author[R. A. E. Mendes]{Ricardo A. E. Mendes}
\address{University of Oklahoma\newline
\indent Department of Mathematics\newline
\indent 601 Elm Ave\newline
\indent Norman, OK, 73019-3103, USA}
\email{ricardo.mendes@ou.edu}
\author[M. Radeschi]{Marco Radeschi}
\address{Universit\`a degli Studi di Torino\newline
\indent Departimento di Matematica ``G. Peano''\newline
\indent Via Carlo Alberto, 10\newline
\indent 10123 Torino (TO), Italy}
\email{marco.radeschi@unito.it}
\thanks{R.~M. has been supported by DMS-2506409, DMS-2005373, and the Dodge Family College of Arts and Sciences Junior Faculty Summer Fellowship of the University of Oklahoma. Part of this work was done during a visit of R.~M. to M.~R. at the Universit\`a degli Studi di Torino, and R.~M. is grateful for the hospitality.}
\subjclass[2020]{53C12, 53C20, 53C21, 57S15, 58J50, 13A50}
\begin{document}

\begin{abstract}
We show that singular Riemannian foliations, or, more generally, manifold submetries, defined on a compact normal homogeneous space, have algebraic nature. 
Moreover, in this case there exists a one-to-one correspondence between algebras of algebraic functions preserved by the Laplace--Beltrami operator, and manifold submetries. 

A key intermediate result is that, for any manifold submetry on a compact normal homogeneous space, the vector field given by the mean curvature of the fibers is basic, in the sense that it is related to a vector field in the base.
\end{abstract}

\maketitle

\section{Introduction}
A \emph{manifold submetry} is defined as a map $\sigma\colon M\to X$, where $(M,g)$ is a Riemannian manifold and $X$ a metric space, which, in addition to being a \emph{submetry}  (that is, $\sigma$ takes metric balls to metric balls of the same radius), has fibers that are smooth, possibly disconnected, submanifolds. Submetries and manifold submetries were originally defined in \cite{Berestovskii87} and \cite{CG16}, respectively. See \cref{prelimsub} for more details. 

Manifold submetries generalize many classical objects: Riemannian submersions, (the quotient maps of) proper isometric group actions and singular Riemannian foliations with closed leaves, including  isoparametric foliations. Moreover, submetries and manifold submetries also appear naturally in rigidity questions and conjecturally in collapsing with lower curvature bounds. We point the interested reader to the  list of references in \cite[Introduction]{KL22}.

An equivalent, and perhaps more natural, object is a partition $\F$ of $M$ into equidistant subsets: The fibers of a submetry $\sigma\colon M\to X$ are equidistant, and, conversely, given a partition $\F$ into equidistant closed subsets (``leaves''), there is a unique metric on the ``leaf space'' $M/\F$ making the natural projection map $\sigma\colon M\to M/\F$ into a submetry. This leads to a natural equivalence relation between (manifold) submetries, namely, $\sigma\colon M\to X$ and $\sigma'\colon M\to X'$ are equivalent if their fibers define the same partition.

In the case where $M$ is a round sphere, every manifold submetry is an \emph{algebraic} object. This has been established in \cite{LR18, MR20}, and results in close ties (including applications) to classical Invariant Theory, see \cite{MR20, MR23, LMR23}. The main goal of the present article is to extend these results to the case where $M$ is a compact homogeneous space.

In this article, a connected, compact Riemannian manifold will be called \emph{homogeneous} when its group of isometries acts transitively, and \emph{normal homogeneous} when, for some compact Lie group $G$ acting transitively on $M$, some bi-invariant metric on $G$ induces the given metric on $M$. The latter include all compact symmetric spaces. Key to the generalization of the algebraicity of manifold submetries from spheres to compact homogeneous space is the selection of an appropriate algebra of functions generalizing polynomials. Given a compact homogeneous space $M$, we define the algebra of ``algebraic functions'' $\RR[M]\subset C^\infty(M)$ to be the span of the eigenfunctions of the Laplace--Beltrami operator. See \cref{prelimhom} for more details, including a justification of the word ``algebraic''.

Our first main result generalizes the main results in \cite{LR18, MR20, MR23} to compact normal homogeneous spaces:
\begin{maintheorem}\label{normal}
Let $(M,g)$ be a compact normal homogeneous space.
\begin{enumerate}[(a)]
\item Every manifold submetry $\sigma\colon M\to X$ is \emph{algebraic}, in the sense that there exists a finite set $\rho_1, \ldots, \rho_k\in\RR[M]$ of basic (that is, constant on the fibers of $\sigma$) eigenfunctions such that the map $\rho=(\rho_1, \ldots, \rho_k)\colon M\to\RR^k$ descends to a homeomorphism from $X$ to the semi-algebraic set $\rho(M)\subset \RR^k$. In particular, each fiber of $\sigma$ is an algebraic variety, namely a level set of $\rho$.
\item There exists a one-to-one correspondence between manifold submetries defined on $M$ (modulo the natural equivalence relation) and sub-algebras of $\RR[M]$ which are preserved by the Laplace--Beltrami operator.
\end{enumerate}
\end{maintheorem}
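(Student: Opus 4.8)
The plan is to run the averaging operator against the stated fact that the mean curvature field of the fibers is basic. Let $B\subset L^2(M)$ be the closed subspace of basic functions and $\Av\colon L^2(M)\to B$ the orthogonal projection, given on continuous functions by fiberwise averaging $\Av(f)(p)=\frac{1}{\vol F_p}\int_{F_p}f$, where $F_p=\sigma^{-1}(\sigma(p))$. The engine of the argument is that $\Delta$ preserves $B$. Indeed, for a basic function $f=\bar f\circ\sigma$ the Riemannian-submersion formula on the regular stratum writes $\Delta f=f_0-H(f)$ with $f_0$ basic and $H$ the mean curvature field; since $H$ is basic by the key intermediate result, $H(f)$ is basic, so $\Delta f$ is basic on the regular part and hence, by continuity, on all of $M$. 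As $\Delta$ is self-adjoint and preserves $B$, it preserves $B^\perp$ too and therefore commutes with $\Av$. Consequently $\Av$ preserves each eigenspace of $\Delta$, giving $\Av(\RR[M])\subseteq\RR[M]\cap B=:\RR[M]^{\mathrm{bas}}$: the fiberwise average of an algebraic function is again algebraic and basic.

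Next I would show $\RR[M]^{\mathrm{bas}}$ separates the fibers. The eigenfunctions of $\Delta$ span a dense subalgebra of $C^0(M)$ by Peter--Weyl, and $\Av$ is a sup-norm contraction fixing $B$, so $\RR[M]^{\mathrm{bas}}=\Av(\RR[M])$ is dense in the continuous basic functions, which separate the fibers (e.g.\ via the distance to a fixed fiber); hence so does $\RR[M]^{\mathrm{bas}}$. Since $X$ is compact of dimension at most $\dim M$, a standard finite-dimensional embedding argument yields finitely many $\rho_1,\dots,\rho_k\in\RR[M]^{\mathrm{bas}}$ with $\rho=(\rho_1,\dots,\rho_k)$ injective on $X$, hence descending to a homeomorphism $X\to\rho(M)$ by compactness. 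Passing to the algebraic model of \cref{prelimhom}, in which $M$ is a compact real algebraic variety $\tilde M\subset\RR^N$ and $\RR[M]$ is the restriction of polynomials, the map $\rho$ becomes polynomial: $\rho(M)$ is semi-algebraic by Tarski--Seidenberg and each fiber $\rho^{-1}(c)$ is an algebraic variety. This establishes (a).

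For (b), send $\sigma$ to $\RR[M]^{\mathrm{bas}}_\sigma$; this is a subalgebra (products of basic functions are basic and $\RR[M]$ is an algebra) and is $\Delta$-invariant by the first paragraph, and the assignment is injective because, by the separation above, the basic algebraic functions determine the partition. The crux is surjectivity. Given a $\Delta$-invariant subalgebra $A\subseteq\RR[M]$, I would form the partition $\F$ whose classes are the fibers of a finite separating tuple $(a_1,\dots,a_m)$ from $A$, and then upgrade the algebraic hypothesis into the geometric submetry axioms. The bridge is that $\Delta$-invariance of $A$ is equivalent to closedness under the carr\'e-du-champ operator $\Gamma(f,g)=\tfrac12\big(\Delta(fg)-f\,\Delta g-g\,\Delta f\big)=\langle\nabla f,\nabla g\rangle$; the resulting basicity of the $\langle\nabla a_i,\nabla a_j\rangle$ is exactly the infinitesimal condition forcing $\F$ to be equidistant. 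I expect this to be the main obstacle: one must prove smoothness and genuine equidistance of \emph{all} fibers, including the singular ones, rather than merely on the regular stratum, and show that the basic algebraic functions of the resulting submetry recover $A$. I would carry this out by transporting the corresponding sphere statements of \cite{MR20} through the algebraic model $\tilde M\subset\RR^N$.
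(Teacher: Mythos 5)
Your outline for part (a) follows the same architecture as the paper (averaging operator commuting with $\Delta$, density of $\oplus_\lambda E_\lambda$ in $C^0(M)$, separation of fibers, Tarski--Seidenberg), but it has a genuine gap at the extraction of a \emph{finite} separating set. You assert that ``a standard finite-dimensional embedding argument'' yields finitely many $\rho_i$ in the basic algebra that are jointly injective on $X$. No such standard argument is available: the set of injective tuples in $C(X,\RR^{2\dim X+1})$ is only a dense $G_\delta$, not open, so density of $\Av(\RR[M])^{k}$ in $C(X,\RR^k)$ does not produce a tuple lying in it; and the naive finite-intersection-property argument fails because $(X\times X)\setminus\Delta$ is not compact. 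The paper deals with precisely this point in \cref{lemmasep}, by applying Hilbert's Basis Theorem to the ideal generated by $\{1\otimes f-f\otimes 1\}$ in $\RR[M]\otimes_\RR\RR[M]$, which is Noetherian because $\RR[M]$ is a finitely generated $\RR$-algebra (\cref{R[M]}); \cref{separatingset} explicitly warns that this step cannot be finessed, since finite generation of $\B(\sigma)$ itself is an open problem (\cref{Q:fg}). A smaller soft spot in the same paragraph: ``$\Delta f$ is basic on the regular part and hence, by continuity, on all of $M$'' conceals the content of \cref{average}, namely that $\Av$ maps smooth functions to smooth functions, which requires elliptic regularity together with the fact that $M\setminus M_{\reg}$ has codimension at least two.

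For part (b), injectivity of $\sigma\mapsto\B(\sigma)$ is fine, but surjectivity --- producing a manifold submetry from an arbitrary $\Delta$-invariant subalgebra $A$ --- is the bulk of the theorem, and you defer it to ``transporting the corresponding sphere statements of \cite{MR20}.'' The paper itself notes, in a footnote, that the relevant step of \cite{MR20} does \emph{not} transport: its proof uses finite generation of the Laplacian algebra, which is only known for spheres. The replacement requires several new ingredients: a semi-algebraic stratification argument (\cref{L:stratification}) to cut down to a smooth base $X_0$ away from a polynomial hypersurface; a real-analyticity argument showing that minimizing geodesics between regular fibers are globally horizontal, which is what upgrades your infinitesimal carr\'e-du-champ observation (used in the paper only to define the metric on $X_0$) to genuine equidistance of fibers in $M$; the result \cite[Proposition 15.2]{LW24} to obtain smoothness of the singular fibers of the completed submetry $\hat\rho$; and the algebraic package of \cref{maximal}, \cref{Reynolds}, and \cref{FF} (maximality, Reynolds operator, field of fractions) to conclude $\B(\hat\rho)=A$ rather than merely $A\subset\B(\hat\rho)$. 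You have correctly located the obstacle, but the proposal does not close it, and the route you point to is known not to go through as stated.
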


\cref{normal} is, in fact, a corollary of more general results (\cref{thmalgebraic} for (a) and \cref{1-1correspondence} for  (b)), where $M$ is a compact (not necessarily normal) homogeneous space and manifold submetries  $\sigma\colon M\to X$ are required to have \emph{basic mean curvature}. Roughly speaking, this means that the mean curvature vectors of the fibers of $\sigma$ form a vector field which is $\sigma$-related to a vector field on $X$. \cref{normal} then follows from \cref{thmalgebraic}, \cref{1-1correspondence}, and the following:
\begin{maintheorem}\label{basicmean}
Let $(M,g)$ be a compact normal homogeneous space, and let $\sigma\colon M\to X$ be a manifold submetry. Then $\sigma$ has basic mean curvature.
\end{maintheorem}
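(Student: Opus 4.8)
The plan is to reduce the statement to a pointwise condition on the regular part and then extract it from the homogeneity of the geometry. On $M_{\reg}$ the map $\sigma$ is a Riemannian submersion onto $X_{\reg}$; write $\mathcal V,\mathcal H$ for the vertical and horizontal distributions, $L_x=\sigma^{-1}(x)$ for the (compact) fibers, and $H$ for the mean curvature vector field. For a basic horizontal field $v$ (the horizontal lift of a field $\bar v$ on $X_{\reg}$), the first variation of volume gives $\bar v\bigl(\log\vol(L_x)\bigr)=\tfrac1{\vol(L_x)}\int_{L_x}\langle H,v\rangle\,dV_{L_x}$, so $\langle H,v\rangle$ always has the correct leafwise average. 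Hence $H$ is basic if and only if $\langle H,v\rangle$ is constant along each fiber for every such $v$; in that case $\langle H,v\rangle=\bar v(\log\vol(L_x))$ and $H$ is the horizontal lift of $\operatorname{grad}_X\log\vol$. The whole theorem therefore reduces to the pointwise assertion that the mean curvature in basic horizontal directions is leafwise constant.

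Next I would lift the problem to the group. Since $M$ is normal homogeneous, $\pi\colon G\to M=G/H$ is a Riemannian submersion whose fibers are left translates of $H$ and hence totally geodesic in the bi-invariant metric. Thus $\tilde\sigma=\sigma\circ\pi\colon G\to X$ is again a manifold submetry (submetries compose, and preimages of smooth fibers under a submersion are smooth); its fibers $\pi^{-1}(L_x)$ are right-$H$-invariant, and because the extra $\pi$-vertical directions are totally geodesic, O'Neill's formulas show that the mean curvature of $\pi^{-1}(L_x)$ in $G$ is $\pi$-related to that of $L_x$ in $M$. Consequently $H$ is basic for $\sigma$ exactly when the mean curvature of $\tilde\sigma$ is basic, and I may assume $M=G$ is a compact Lie group with a bi-invariant metric, where the curvature and the geodesics are completely explicit.

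The main geometric input is that normal homogeneous spaces are geodesic-orbit spaces: every geodesic $\gamma$ is the orbit of a Killing field $K$ coming from a one-parameter subgroup, with $K|_\gamma=\gamma'$. By transnormality a geodesic issuing horizontally stays horizontal, so for horizontal $\gamma$ the field $K$ is horizontal along $\gamma$ and $K^{\mathcal V}$ vanishes on $\gamma$. Using that $\nabla K$ is skew (as $K$ is Killing) one obtains, along $\gamma$, the identity $\langle H,\gamma'\rangle=-\operatorname{div}_{\mathcal V}(K^{\mathcal V})$, expressing the mean curvature through the leafwise divergence of the vertical part of a Killing field; and since the flow of $K$ carries each fiber to a congruent one, the first-variation identity yields $\int_{L_x}\langle H,K\rangle\,dV=0$ for every Killing field $K$. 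I would also record that, writing $\gamma(t)=\phi_t(\gamma(0))$ for the isometric flow $\phi_t$ of $K$, the curvature operator $R(\,\cdot\,,\gamma')\gamma'$ is transported by $d\phi_t$, so the Riccati equation $S'+S^2+R_{\gamma'}=0$ for the shape operators of the fibers along $\gamma$ has homogeneous coefficients; in particular the evolution of $\operatorname{tr}S=\langle H,\gamma'\rangle$ is governed purely algebraically along each horizontal geodesic.

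The main obstacle is to pass from these averaged identities to the pointwise, leafwise-constant statement. Given $p,q$ in the same fiber over $x$ and a basic $v$, the two horizontal geodesics $\gamma_p,\gamma_q$ covering the same base geodesic each obey a controlled Riccati evolution, but the Killing flow does not preserve the partition into fibers, so one cannot simply transport $L_x$ along $\gamma_p$ and compare. The argument must instead match the initial shape and curvature operators at $p$ and $q$---using the ambient homogeneity, concretely on $G$ the right-$H$-invariance of the fibers together with the abundance of Killing fields from $\mathfrak g$ to relate the transverse geometry at different points of a fiber---and then invoke uniqueness for the Riccati equation to conclude $\langle H_p,v\rangle=\langle H_q,v\rangle$. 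Converting the $L^2$-orthogonality and first-variation identities into this pointwise comparison is where the full force of normal homogeneity is used and is the crux of the proof; once it is established, $\langle H,v\rangle=\bar v(\log\vol)$ is basic and \cref{basicmean} follows.
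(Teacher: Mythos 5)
Your preparatory reductions are mostly sound: the equivalence of basicness of $H$ with leafwise constancy of $\langle H,v\rangle$ for basic horizontal $v$, the lift to $G$ along the totally geodesic coset fibration, the vanishing of $K^{\mathcal V}$ along a horizontal geodesic orbit, and the divergence identity relating $\langle H,K\rangle$ to the leafwise divergence of $K^{\top}$ together with its integrated consequence $\int_{L}\langle H,K\rangle\,dV=0$. But these only reproduce an averaged statement, and you say yourself that the passage to the pointwise claim ``is the crux of the proof.'' That crux is exactly what is missing, and the strategy you sketch for it --- matching the initial shape and curvature operators at $p$ and $q$ and invoking uniqueness for the Riccati equation --- cannot work. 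For a general manifold submetry there is no isometry carrying $p$ to $q$ and preserving the fiber, and the shape operators $A_{v_1}$ at $p$ and $A_{v_2}$ at $q$ (with $d\sigma(v_1)=d\sigma(v_2)$) are in general \emph{not} conjugate; already for isoparametric foliations of spheres the individual principal curvatures in corresponding directions need not match. Only the traces agree, and that equality is precisely the theorem, so an argument that begins by matching the full operators assumes something strictly stronger than the conclusion, and false.

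The paper closes the gap with two ingredients absent from your outline. First (\cref{basicfocaldata}), what \emph{is} basic along two such normal directions is not the shape operator but the collection of focal distances with multiplicities; this follows from Wilking's transverse Jacobi equation and holds for any manifold submetry on any complete manifold, with no homogeneity needed. Second (\cref{shape}), on a compact \emph{normal} homogeneous space the trace of $A_v$ is recovered from the focal distances alone via $\operatorname{tr}(A_v)=\sum_k\left(1/l_k^-+1/l_k^+\right)$; this is proved not by a Riccati comparison on $M$ or on $G$, but by lifting $L$ through the Riemannian submersion $V\to M$ from a separable Hilbert space with minimal, regularizable fibers (Heintze--Liu--Olmos), where the eigenvalues of the lifted shape operator are literally the reciprocals of the focal distances and the regularized trace equals $\operatorname{tr}(A_v)$. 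Combining the two gives basic mean curvature. Without a substitute for these two steps your proposal does not yield the theorem.
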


Prior to \cref{basicmean}, the basic-mean-curvature property was only known when the  singular Riemannian foliation is given by the orbits of an isometric action, or when the ambient space is one of: spheres, Euclidean spaces \cite{AR15}, or $\CC P^n, \HH P^n$ \cite{AR16}. \cref{basicmean} significantly expands this list of spaces to all compact normal homogeneous spaces (in particular,  all compact symmetric spaces).

The proof of \cref{basicmean} follows from \cref{shape}, which is an explicit formula for the mean curvature of an arbitrary submanifold of a compact normal homogeneous space in terms of its focal distances, and which we believe is of independent interest. In turn, the proof of \cref{shape} uses the existence of a Riemannian submersion from a Hilbert space onto the compact normal homogeneous space, and the theory developed around this submersion by several authors, including in \cite{TT95, HLO06}. To the best of our knowledge, until now this theory has only been used in the context of isoparametric foliations, so the application to \cref{basicmean} is novel.

Due to the long-standing interest in the classification of Riemannian submersions from Lie groups with bi-invariant metric (see \cite[Problem 5.4]{Grove02}, and \cref{Q:regular} below), we point out the following immediate consequence of \cref{normal}:
\begin{maincorollary}
Riemannian submersions from compact Lie groups with bi-invariant metric are algebraic.
\end{maincorollary}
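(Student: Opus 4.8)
The plan is to recognize both objects in the statement as instances of the setup of \cref{normal}: the domain $G$, endowed with a bi-invariant metric, as a compact normal homogeneous space, and the Riemannian submersion as a manifold submetry. Once these two identifications are made, part (a) of \cref{normal} applies directly and delivers algebraicity, so the entire content of the argument lies in justifying them.

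For the first identification I would let $G$ carry a bi-invariant metric $g$ and consider the action of $K := G\times G$ on $G$ by $(a,b)\cdot x = axb^{-1}$. This action is transitive and isometric, and its isotropy group at $e\in G$ is the diagonal $\Delta G$, so that $G\cong K/\Delta G$. Equipping $K$ with a product bi-invariant metric $Q$, the reductive complement $\mathfrak{m}=(\operatorname{Lie}\Delta G)^\perp$ is the anti-diagonal $\{(X,-X): X\in\mathfrak{g}\}$, and a short computation shows that the normal homogeneous metric induced on $G=K/\Delta G$ is a fixed positive multiple of $g$; rescaling $Q$ makes the two agree exactly. Hence $(G,g)$ is a compact normal homogeneous space. (Alternatively, one may simply invoke that a compact Lie group with bi-invariant metric is a symmetric space, and that compact symmetric spaces are normal homogeneous.)

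For the second identification, note that since $G$ is compact it is complete, and a Riemannian submersion $\pi\colon G\to B$ from a complete Riemannian manifold is automatically a submetry: horizontal lifts of geodesics of $B$ are geodesics of the same length in $G$, while $\pi$ is $1$-Lipschitz, and these two facts together yield $\pi(B_r(p))=B_r(\pi(p))$ for every $p\in G$ and every $r>0$. The fibers $\pi^{-1}(b)$ are closed embedded submanifolds of $G$. Therefore $\pi$ is a manifold submetry, with the base $B$---equipped with its Riemannian distance---as the target metric space.

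Combining the two identifications, \cref{normal}(a) produces basic eigenfunctions $\rho_1,\dots,\rho_k\in\RR[G]$ such that $\rho=(\rho_1,\dots,\rho_k)$ descends to a homeomorphism from $B$ onto the semi-algebraic set $\rho(G)\subset\RR^k$ and each fiber of $\pi$ is a level set of $\rho$; this is precisely the statement that $\pi$ is algebraic. The one point that is not purely formal is the verification that a Riemannian submersion is a submetry, which is where completeness---hence the compactness of $G$---is genuinely used; everything else is a matter of correctly matching the hypotheses of \cref{normal}.
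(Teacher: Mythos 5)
Your proposal is correct and matches the paper's (implicit) argument, which simply records this as an immediate consequence of \cref{normal}(a) once one notes that $(G,g)$ is a compact normal homogeneous space and that a Riemannian submersion from a complete manifold is a manifold submetry. The only remark worth making is that the first identification is even more immediate than your $G\times G/\Delta G$ presentation: taking $G$ itself acting by left translations with $Q=g$, the ``natural quotient map'' in the paper's definition is the identity, which is trivially a Riemannian submersion.
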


\subsection*{Open questions}\ 

The 1-1 correspondence between Laplacian algebras and manifold submetries in \cref{normal}(ii) leads to a dictionary between algebraic properties of Laplacian algebras, and  geometric properties of the corresponding manifold submetries.  See \cite{MR20, MR23, LMR23} for a few entries in this dictionary in the spherical case. We propose:
\begin{question}\label{Q:regular}
Let $(M,g)$ be a compact homogeneous space. Characterize (algebraically) the Laplacian sub-algebras of $\RR[M]$ which correspond to \emph{regular} Riemannian foliations.
\end{question}
A potential application is the study of Riemannian submersions from compact Lie groups with bi-invariant metric, a question proposed in \cite[Problem 5.4]{Grove02}. In the case where the fibers are totally geodesic, it is expected that only coset fibrations occur, and a proof was proposed in \cite{Speranca17}, but in the general case a wealth of non-coset examples were found in \cite{KS12}. In the case $M=S^n$, the only Riemannian submersions are the Hopf fibrations, see \cite{GG88, Wilking01, LW16} (with older partial results by Escobales and Ranjan). Even in this case, we believe it would be interesting to produce an alternative proof using an answer to \cref{Q:regular}.

\begin{question}\label{Q:fg}
Let $(M,g)$ be a compact homogeneous space, and $A\subset \RR[M]$  a Laplacian subalgebra. Is $A$ finitely generated?
\end{question} 
In the spherical case, the answer is yes, and the proof in \cite{MR20} uses an extension of the foliation on $S^n$ to $\RR^{n+1}$, where the associated Laplacian algebra is graded. A strengthening of (and possible approach to) \cref{Q:fg} is to investigate whether, given a transitive isometric action of a Lie group $G$ on $M$, and an isometric $G$-equivariant embedding of $M$ into an orthogonal $G$-representation $V$, there exists an extension of the given manifold submetry to an infinitesimal manifold submetry on $V$.

\begin{question}\label{Q:product}
For what closed Riemannian manifolds $M$  is $\oplus_\lambda E_\lambda$ (the span of all eigenfunctions of the Laplace--Beltrami operator) closed under multiplication? That is, when is the product of any two eigenfunctions a \emph{finite} sum of eigenfunctions?
\end{question}
\cref{R[M]} implies that the answer is yes for $M$ homogeneous. If $M$ satisfies this property, it is not hard to see that so does $M/\Gamma$, where $\Gamma$ is a finite group acting freely by isometries. Thus some inhomogeneous spaces, such as the Klein bottle, also have this property. It seems likely that only very special compact Riemannian manifolds satisfy this property, but the authors are not aware of many cases where this property is known to fail. One such case is that of compact hyperbolic surfaces, see 
\cite{MOproduct}.

\begin{question}
For what Riemannian manifolds $M$ is it true that, for every submanifold $L\subset M$, and point $p\in L$, the mean curvature vector $H(p)$ at $p$ is determined by the focal distances in the normal directions to $L$ at $p$?
\end{question}
These include all compact normal homogeneous spaces by \cref{shape}. One can rewrite this question in an abstract way along a single geodesic, in terms only of the curvature endomorphism. There may be interesting relations with complex analysis, via partial fraction decomposition, in analogy with \cref{E:Euler}.

\subsection*{Overview of the proofs}\

{\bf \cref{basicmean}: } Let $M$ be a complete Riemannian manifold, and $\sigma\colon M\to X$ a manifold submetry. By arguments using Wilking's Transverse Jacobi equation, the ``focal data'' of $\sigma$ are basic (see \cref{basicfocaldata}). This means that, along two geodesics starting normally to the same regular $\sigma$-fiber $L$, and with initial velocities that map to the same vector by the differential $d\sigma$, the collection of $L$-focal distances, counted with multiplicities, are the same. 

Now assume $M$ is a compact normal homogeneous space. Then, by \cref{shape}, the collection of focal distances along normal geodesics to any submanifold $L$ of $M$ determine its mean curvature. In particular, $\sigma$ has basic mean curvature.

The proof of \cref{shape} relies on the existence, and properties, of a Riemannian submersion with minimal fibers (in a ``regularized'' sense) from a separable Hilbert space $V$ onto $M$, which we extract from \cite{HLO06}. Roughly speaking, the focal distances along a normal geodesic $c$ to $L$ do not change when one passes to the inverse image $\hat{L}\subset V$, and a horizontal lift $\hat{c}$ of the geodesic. Since the fibers of $V\to M$ are minimal, the trace of the corresponding shape operators also do not change when one passes from $L$ to $\hat{L}$. Finally, since $V$ is a vector space, the eigenvalues of the shape operator of $\hat{L}$ are simply the reciprocals of the focal distances.

{\bf \cref{normal}(a):} Let $M$ be a compact normal homogenous space, and $\sigma\colon M\to X$ a manifold submetry. Define the averaging operator $\Av\colon L^2(M)\to L^2(M)$ by defining $\Av(f)(p)$ to be the average value of $f$ on the unique $\sigma$-fiber $L_p$ going through $p$. Since, by \cref{basicmean}, $\sigma$ has basic mean curvature, arguments as in \cite{LR18} involving elliptic regularity imply that $\Av$ takes smooth functions to smooth functions, and commutes with the Laplace--Beltrami operator $\Delta$ (\cref{average}). Since the span of the eigenfunctions of $\Delta$ (which we denote $\RR[M]$) is dense in $C^0(M)$ (\cref{supnorm}), the averaging operator can be used to show that the algebra $\B(\sigma)$ spanned by all $\sigma$-basic eigenfunctions separates $\sigma$-fibers. Using the fact that $\RR[M]$ is Noetherian,  we extract a finite subset $\{\rho_1, \ldots, \rho_k\}\subset\B(\sigma)$ of basic eigenfunctions which still separate fibers (\cref{lemmasep}), and it is routine to check (\cref{P:algebraic}) that these eigenfunctions satisfy the properties listed in \cref{normal}(a).

{\bf \cref{normal}(b):} The key step is to construct, given an arbitrary sub-algebra $A\subset \RR[M]$ preserved by $\Delta$, a manifold submetry on $M$ whose algebra of basic algebraic functions coincides with $A$. This is done in steps similar to \cite{MR20}, where the case $M$ a round sphere is treated. It starts with  purely algebraic facts about $A$, in particular that $A$ is maximal (in the sense that any strictly larger sub-algebra separates more points), that $A$ admits a Reynolds operator (an algebraic analogue of the averaging operator), and that $A=F(A)\cap \RR[M]$, where $F(A)$ denotes the field of fractions of $A$.

The geometric construction starts from a finite separating set $\{\rho_1, \ldots, \rho_k\}\subset A$, from which a Riemannian submersion with basic mean curvature $\rho_0\colon M_0\to X_0\subset\RR^k$ is defined, where  $M_0\subset M$ is open and full-measure, and $X_0$ is a smooth manifold with an appropriate Riemannian metric. Taking closure, we obtain a submetry $\hat{\rho}\colon M\to\hat{X}$, which we prove is a manifold submetry using \cite{LW24}. Using that $A$ is maximal and the fibers of $\rho_0$ and $\hat{\rho}$ agree on $M_0$, we prove that the sub-algebras $A$ and $\B(\hat{\rho})$ have the same field of fractions, and this implies  $A=\B(\hat{\rho})$.

\subsection*{Acknowledgements} We thank Alexander Lytchak for  the idea of using the infinite-dimensional theory of submanifolds to prove \cref{basicmean}.

\section{Preliminaries}
\subsection{Spectrum of the Laplacian}
Let $(M,g)$ be a closed Riemannian manifold. The Laplace--Beltrami operator (or ``Laplacian'') on functions  is defined by
\[\Delta (f) = -\operatorname{div}(\nabla f) \]
for $f\in C^\infty(M)$.

We denote the eigenvalues of $\Delta$ by 
\[0=\lambda_0<\lambda_1<\lambda_2<\cdots\]
where $\lambda_i\to\infty$ as $i\to\infty$. For an eigenvalue $\lambda$, we denote by $E_\lambda$ the corresponding eigenspace, and note that it is finite-dimensional. If we select an $L^2$-orthonormal basis of each $E_{\lambda}$, their union over all $\lambda$ forms an $L^2$-orthonormal basis of $L^2(M)$.

Any isometry of $M$ induces a linear map on $C^\infty(M)$ which preserves each eigenspace. Thus, given an isometric group action of $G$ on $M$, each eigenspace has the natural structure of a $G$-representation. 

We will need the following result, which seems to be well-known to experts. Since the authors have not been able to locate a reference in the literature, a proof is provided.
\begin{lemma}\label{supnorm}
Let $(M,g)$ be a closed Riemannian manifold. Then $\oplus_\lambda E_\lambda$ (the span of all eigenfunctions of $\Delta$) is dense in $C^0(M)$ in the $\sup$ norm.
\end{lemma}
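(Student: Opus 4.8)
The plan is to use the heat semigroup $P_t = e^{-t\Delta}$ as a smoothing device that simultaneously approximates the identity on $C^0(M)$ and is diagonalized by the eigenfunctions. Write $\{\phi_j\}$ for an $L^2$-orthonormal basis of eigenfunctions with $\Delta\phi_j = \lambda_j\phi_j$, and recall that the associated heat kernel $p_t(x,y) = \sum_j e^{-t\lambda_j}\phi_j(x)\phi_j(y)$ is smooth on $M\times M$ for each $t>0$, so that $P_tf(x) = \int_M p_t(x,y)f(y)\,dy$. Two standard facts about $p_t$ on a closed manifold will drive the argument: it is a nonnegative approximate identity, so that $P_tf\to f$ uniformly as $t\to 0^+$ for every $f\in C^0(M)$; and, being continuous on the compact space $M\times M$, it satisfies $C_t := \sup_{x\in M} p_t(x,x) = \sup_{x} \sum_j e^{-t\lambda_j}\phi_j(x)^2 < \infty$ for each fixed $t$.

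First I would fix $f\in C^0(M)$ and $\varepsilon>0$, and choose $t>0$ small enough that $\|P_tf - f\|_\infty < \varepsilon/2$. It then suffices to approximate the single smooth function $P_tf$ uniformly by finite linear combinations of eigenfunctions. The natural candidate is the truncated expansion $S_N = \sum_{\lambda_j\le N} e^{-t\lambda_j}\langle f,\phi_j\rangle\,\phi_j$, which lies in $\oplus_\lambda E_\lambda$. The key estimate is to control the tail $P_tf - S_N = \sum_{\lambda_j > N} e^{-t\lambda_j}\langle f,\phi_j\rangle\phi_j$ in sup norm. Splitting each term as $(e^{-t\lambda_j/2}\langle f,\phi_j\rangle)(e^{-t\lambda_j/2}\phi_j(x))$ and applying Cauchy--Schwarz pointwise gives
\[ |P_tf(x) - S_N(x)| \le \Big(\sum_{\lambda_j>N} e^{-t\lambda_j}|\langle f,\phi_j\rangle|^2\Big)^{1/2}\Big(\sum_{\lambda_j>N} e^{-t\lambda_j}\phi_j(x)^2\Big)^{1/2}. \]
The second factor is at most $p_t(x,x)^{1/2} \le C_t^{1/2}$, uniformly in $x$, while the first factor is bounded by $e^{-tN/2}\|f\|_{L^2}$ via Parseval, using $\|f\|_{L^2}\le \vol(M)^{1/2}\|f\|_\infty < \infty$. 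Hence $\|P_tf - S_N\|_\infty \le C_t^{1/2} e^{-tN/2}\|f\|_{L^2} \to 0$ as $N\to\infty$, so for $N$ large $\|P_tf - S_N\|_\infty < \varepsilon/2$, and therefore $\|f - S_N\|_\infty < \varepsilon$.

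I expect the only genuinely delicate point to be the uniform (rather than merely $L^2$) convergence of the eigenfunction expansion of $P_tf$, which is exactly what the Cauchy--Schwarz-plus-heat-kernel-bound step above resolves; everything else is standard semigroup theory on compact manifolds. One could alternatively invoke H\"ormander's pointwise bounds $\|\phi_j\|_\infty \le C(1+\lambda_j)^{(n-1)/4}$ together with a Weyl-type count of the eigenvalues, but using the continuity and positivity of the heat kernel avoids any appeal to sharp eigenfunction asymptotics and keeps the argument self-contained. The approximate-identity property $P_tf\to f$ uniformly, which I treat as known, itself follows from $\int_M p_t(x,y)\,dy = 1$ (as $P_t$ fixes constants) together with the concentration of the kernel's mass near the diagonal as $t\to 0^+$.
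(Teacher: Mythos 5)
Your proof is correct, but it takes a genuinely different route from the paper's. The paper first reduces to $f\in C^\infty(M)$ via Stone--Weierstrass and then shows that the eigenfunction expansion of a smooth function converges absolutely in the sup norm: self-adjointness of $\Delta$ gives $|a_k|\le\|\Delta^j f\|_{L^2}\lambda_{i(k)}^{-j}$ for every $j$, and this rapid decay is played off against Weyl's law and the eigenfunction bound $\|\phi_k\|_{L^\infty}\le C'\lambda_{i(k)}^{(n-1)/4}$. Your argument replaces all of the spectral asymptotics with the single soft fact that $p_t(x,x)=\sum_j e^{-t\lambda_j}\phi_j(x)^2$ is finite and bounded on the compact manifold, which via Cauchy--Schwarz controls the tail of the expansion of $P_tf$ uniformly; the smoothing is done by the semigroup rather than by a Stone--Weierstrass reduction, so you also work directly with continuous $f$. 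What the paper's route buys is the more precise conclusion that for smooth $f$ the eigenfunction series itself converges absolutely and uniformly to $f$; what yours buys is independence from Weyl's law and H\"ormander-type sup-norm estimates, at the price of importing the standard heat-kernel package (smoothness and positivity of $p_t$, $\int_M p_t(x,y)\,dy=1$, and the uniform convergence $P_tf\to f$ in $C^0$, which needs an off-diagonal decay estimate and which you correctly flag as the one external input). If you write this up, two points are worth making explicit: the identity $\sum_j e^{-t\lambda_j}\phi_j(x)^2=p_t(x,x)$ is cleanest via the semigroup law and Parseval, namely $p_t(x,x)=\|p_{t/2}(x,\cdot)\|_{L^2}^2$; and the equality $P_tf-S_N=\sum_{\lambda_j>N}e^{-t\lambda_j}\langle f,\phi_j\rangle\phi_j$ holds a priori only in $L^2$, so one should observe that your estimate shows the series converges uniformly to a continuous function, which must then coincide with $P_tf-S_N$ everywhere.
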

\begin{proof}
By the Stone--Weierstrass theorem (see, e.g. \cite[Theorem 7.32 on page 162]{Rudin}), $C^\infty(M)$ is dense in $C^0(M)$ in the sup norm, so it suffices to show $\oplus_\lambda E_\lambda$ is dense in $C^\infty(M)$.

Let $f\in C^\infty(M)$, and write $f=\sum_{k=0}^\infty a_k \phi_k$, where $\{\phi_k\}$ is an $L^2$-orthonormal basis of eigenfunctions, with eigenvalues $\lambda_{i(k)}$. Using self-adjointness of $\Delta$, we have $a_k=\langle f,  \phi_k\rangle = \langle \Delta^j f,  \lambda_{i(k)}^{-j}\phi_k\rangle $, and, in particular, $|a_k|\leq \|\Delta^j f \|_{L^2} \lambda^{-j}_{i(k)}$, for every choice of $j$. 

It follows from Weyl's Law (see \cite[Section I.3]{Chavel}) that $\lambda_{i(k)}\sim C k^{2/n}$ (where $n=\dim M$). This implies  that $|a_k|=O(k^{-N})$ for every choice of $N>0$, because we can take $j> Nn/2$ in the inequality above.

Since the sup norm satisfies $\|\phi_k\|_{L^\infty}\leq C' \lambda_{i(k)}^{(n-1)/4}$ (see e.g., \cite[Abstract]{SZ02}), we can take $N$ large enough so that $\frac{2}{n}\frac{n-1}{4}-N<-1$, and use Weyl's Law again to conclude that the series $\sum_{k=0}^\infty |a_k| \| \phi_k\|_{L^\infty}$ converges.

Therefore the partial sums of $\sum_{k=0}^\infty a_k \phi_k$ form a sequence in $\oplus_\lambda E_\lambda$ which converges  in sup norm to $f$.
\end{proof}

\subsection{Algebraic functions on compact homogeneous spaces}\label{prelimhom}
In this section we fix notations and recall basic facts about homogeneous spaces.

\begin{definition}
We say a connected closed Riemannian manifold $(M,g)$ is a \emph{compact homogeneous space} when the natural action by the isometry group is transitive. We say a compact homogeneous space $(M,g)$ is a \emph{compact normal homogeneous space} when there is a (compact) Lie group $G$ acting transitively by isometries on $(M,g)$ and a bi-invariant metric $Q$ on $G$ such that the natural quotient map $(G,Q)\to (M,g)$ is a Riemannian submersion.
\end{definition}

\begin{definition}
Let $(M,g)$ be a compact homogeneous space, and $f\colon M\to\RR$ a function. We say $f$ is \emph{algebraic} when $f$ is a finite sum of eigenfunctions of the Laplace--Beltrami operator $\Delta$. We denote by $\RR[M]$ the ($\RR$-vector space\footnote{Actually an $\RR$-algebra, see following couple of lemmas.}) of all algebraic functions. In other words, $\RR[M]=\bigoplus_\lambda E_\lambda$, where $E_\lambda$ denotes the eigenspace associated to the eigenvalue $\lambda$.
\end{definition}

The following facts justify the name ``algebraic'' in the previous definition:
\begin{lemma}\label{algebraicfunctions}
Let $(M,g)$ be a compact homogeneous space, and $G$ a compact Lie group acting transitively by isometries on $M$. Then there exists a $G$-equivariant embedding $\varphi\colon M\to\RR^N$, where $G$ acts linearly on $\RR^N$. Moreover, for any such choice of embedding, and any continuous function $f\colon M\to \RR$, the following are equivalent:
\begin{enumerate}[(a)]
\item $f$ is algebraic.
\item $f$ is $G$-finite, that is, the $G$-orbit of $f$ spans a finite-dimensional vector space.
\item $f=P\circ \varphi$ for some polynomial map $P\colon\RR^N\to \RR$.
\end{enumerate}
\end{lemma}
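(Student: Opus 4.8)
The plan is to first produce the equivariant embedding by representation theory, then to identify $\RR[M]$ with the algebraic sum of the $G$-isotypic components of $L^2(M)$, and finally to run through the implications, with almost all of the content concentrated in upgrading a Stone--Weierstrass density statement to an exact equality of algebras. For the existence of $\varphi$ I would invoke the Mostow--Palais equivariant embedding theorem, or construct it by hand from Peter--Weyl: representative functions are dense in $C^\infty(M)$, so one chooses a finite-dimensional $G$-invariant subspace $W\subset C^\infty(M)$ whose elements separate points and whose differentials separate tangent vectors; averaging any inner product over $G$ makes $W$ an orthogonal $G$-representation, and $p\mapsto \mathrm{ev}_p$ gives a $G$-equivariant embedding $M\hookrightarrow W^*\cong\RR^N$. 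Since $G$ acts by isometries it commutes with $\Delta$, so each eigenspace $E_\lambda$ is a finite-dimensional $G$-representation. Decomposing $L^2(M)=\widehat{\bigoplus}_\pi V_\pi$ into $G$-isotypic components, the standard facts for $M=G/H$ give that each $V_\pi$ is finite-dimensional (with $\dim V_\pi=\dim\pi\cdot\dim\pi^H$), consists of smooth functions, and is $\Delta$-invariant. As $\Delta|_{V_\pi}$ is a self-adjoint $G$-endomorphism of a finite-dimensional inner-product space, it is diagonalizable, so each $V_\pi$ is a finite sum of eigenspaces; hence $\RR[M]=\bigoplus_\lambda E_\lambda=\bigoplus_\pi V_\pi$, the algebraic direct sum of the isotypic components, which is precisely the space of $G$-finite continuous functions.

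For (a) $\Leftrightarrow$ (b): if $f$ is algebraic it lies in a finite sum of the $E_\lambda$, each finite-dimensional and $G$-invariant, so the $G$-orbit of $f$ spans a finite-dimensional space and $f$ is $G$-finite. Conversely, if $f$ is $G$-finite and continuous, then $V_f:=\mathrm{span}(G\cdot f)$ is a finite-dimensional $G$-subrepresentation of $L^2(M)$; projecting onto the isotypic components, only the finitely many $\pi$ occurring in $V_f$ contribute, so $f\in\bigoplus_{\pi\in S}V_\pi\subset\RR[M]$ for a finite set $S$ (and in particular $f$ is automatically smooth).

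The heart of the argument is (c) $\Leftrightarrow$ (a). For (c) $\Rightarrow$ (b), equivariance gives $(g\cdot f)(x)=f(g^{-1}x)=P(g^{-1}\varphi(x))$, so the translate $g\cdot f$ is the pullback of the polynomial $y\mapsto P(g^{-1}y)$, of degree $\le\deg P$; since polynomials of bounded degree span a finite-dimensional space, $f$ is $G$-finite. For (a) $\Rightarrow$ (c), let $\mathcal{A}\subset C(M)$ be the subalgebra generated by the components $\varphi_1,\dots,\varphi_N$, i.e. the set of all pullbacks $P\circ\varphi$. By (c) $\Rightarrow$ (b) $\Rightarrow$ (a) every element of $\mathcal{A}$ is algebraic, so $\mathcal{A}\subset\RR[M]=\bigoplus_\pi V_\pi$, and the computation above shows $\mathcal{A}$ is $G$-invariant. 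I would then argue $\mathcal{A}=\RR[M]$ one isotypic component at a time: being a $G$-submodule of the completely reducible module $\bigoplus_\pi V_\pi$ with pairwise non-isomorphic summands, $\mathcal{A}$ splits as $\bigoplus_\pi(\mathcal{A}\cap V_\pi)$. If some inclusion $\mathcal{A}\cap V_\pi\subset V_\pi$ were proper, its $L^2$-orthogonal complement $C$ inside $V_\pi$ would be a nonzero $G$-subrepresentation; since distinct isotypic components are $L^2$-orthogonal and $\mathcal{A}=\bigoplus_{\pi'}(\mathcal{A}\cap V_{\pi'})$, we would have $C\perp\mathcal{A}$. But $\mathcal{A}$ separates points (as $\varphi$ is injective) and contains constants, so it is dense in $C(M)$, hence in $L^2(M)$, by Stone--Weierstrass, forcing $C=0$. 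Thus $\mathcal{A}\cap V_\pi=V_\pi$ for all $\pi$, so $\mathcal{A}=\RR[M]$, which is exactly the equivalence of (a) and (c).

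The genuinely delicate point is this last step: Stone--Weierstrass only yields that polynomial pullbacks are \emph{dense}, whereas the conclusion demands that every algebraic function be \emph{exactly} a polynomial pullback. The device that closes this gap is the interplay between the $G$-module structure and the $L^2$-orthogonality of the finite-dimensional isotypic components, which promotes ``dense'' to ``everything.'' The remaining care is in justifying the standard representation-theoretic inputs -- finiteness, smoothness, and $\Delta$-invariance of the $V_\pi$ -- all of which rest on Peter--Weyl together with the fact that the isometric $G$-action commutes with $\Delta$.
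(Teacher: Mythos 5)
Your proof is correct, and it differs from the paper's in one substantive way. For the existence of $\varphi$ and for (a) $\Leftrightarrow$ (b) you follow essentially the same path as the paper: Mostow--Palais for the embedding, and the finite-dimensionality, smoothness, and $\Delta$-invariance of the isotypic components of $L^2(M)$, followed by diagonalizing $\Delta$ on each (the paper obtains the isotypic decomposition by pulling $\F(M)$ back to the representative functions on $G$ and citing \cite{BtD}; you get it directly from Peter--Weyl and Frobenius reciprocity for $G/H$ --- the same content). The genuine divergence is the equivalence with (c): the paper simply cites \cite[Lemma 3.1]{Helgason63} for (b) $\Leftrightarrow$ (c), whereas you prove the hard direction from scratch. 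Your argument --- take the algebra $\mathcal{A}$ of polynomial pullbacks, note it is a $G$-invariant subalgebra of $\RR[M]=\bigoplus_\pi V_\pi$, split it as $\bigoplus_\pi(\mathcal{A}\cap V_\pi)$ along isotypic components, and play Stone--Weierstrass density in $L^2$ against the $L^2$-orthogonality of the finite-dimensional $V_\pi$ to force $\mathcal{A}\cap V_\pi=V_\pi$ --- is correct and amounts to a self-contained proof of Helgason's lemma; the promotion of ``dense'' to ``equal'' via orthogonality is exactly the right device. It costs length and requires the standard facts you flag: smoothness of elements of finite-dimensional $G$-invariant subspaces of $L^2$, and the fact that a $G$-submodule of an algebraic sum of isotypic components splits along them (over $\RR$ this still holds because the isotypic projections are given by integration against central functions and preserve the finite-dimensional span of each orbit). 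One cosmetic correction: the $V_\pi$ are isotypic components, not irreducibles, so ``pairwise non-isomorphic summands'' should read ``summands with no common irreducible constituents''; the splitting argument is unaffected.
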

\begin{proof}
By \cite{Mostow57, Palais57}, a $G$-equivariant embedding $\varphi$ always exists. Fix one such embedding.

The equivalence between (b) and (c) follows from \cite[Lemma 3.1]{Helgason63}. 

(a)$\implies$(b): Assume $f$ is algebraic. Since each eigenspace of $\Delta$ is a finite-dimensional $G$-invariant subspace, it follows that $f$ is $G$-finite.

(b)$\implies$(a):  let $\F(M)$ denote the algebra of $G$-finite functions, fix $p\in M$, and consider $\pi\colon G\to M$ given by $p\mapsto g\cdot p$.  Since $\pi$ is $G$-equivariant and surjective, the pull-back map $\pi^*$ is a $G$-equivariant linear embedding of $\F(M)$ into the algebra $\F(G)$ of $G$-finite (that is, ``representative'') functions on $G$. By \cite[III Prop. 1.5]{BtD}, $\F(G)$ is the direct sum of its isotypical components, each of which is finite-dimensional. Therefore  $\F(M)$ is the direct sum of its isotypical components, each of which is finite-dimensional.

Since $\Delta$ is $G$-equivariant, it preserves $\F(M)$, and it preserves each of its isotypical components. Let $f\in\F(M)$ be a $G$-finite function. Then $f=f_1+\cdots + f_k$, where each $f_i$ lies in an isotypical component. By the (finite-dimensional) spectral theorem, each $f_i$ is a finite sum of eigenfunctions of $\Delta$, and therefore $f$ is algebraic.
\end{proof}

\begin{lemma}\label{R[M]}
Let $(M,g)$ be a compact homogeneous space. Then $\RR[M]$ is a finitely-generated $\RR$-algebra, which is dense in $C^0(M)$ in the supremum norm.
\end{lemma}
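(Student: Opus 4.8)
The plan is to prove \cref{R[M]} in two parts: first that $\RR[M]$ is an $\RR$-algebra and finitely generated, then that it is dense in $C^0(M)$. The density is already essentially handed to us by \cref{supnorm}, since $\RR[M]=\bigoplus_\lambda E_\lambda$ is by definition the span of all eigenfunctions, and \cref{supnorm} states precisely that this span is dense in $C^0(M)$ in the sup norm. So the only real content is the algebra structure and finite generation.

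For the algebra structure, I would use the characterization in \cref{algebraicfunctions}. Fix a transitive $G$-action and a $G$-equivariant embedding $\varphi\colon M\to\RR^N$ as provided there. By the equivalence (a)$\iff$(c), a continuous $f$ is algebraic if and only if $f=P\circ\varphi$ for some polynomial $P\colon\RR^N\to\RR$. Thus $\RR[M]$ is exactly the image of the restriction homomorphism $\RR[x_1,\dots,x_N]\to C^0(M)$, $P\mapsto P\circ\varphi$. Since the product of $P_1\circ\varphi$ and $P_2\circ\varphi$ is $(P_1P_2)\circ\varphi$, this image is closed under multiplication, so $\RR[M]$ is a subalgebra of $C^0(M)$ — this justifies the footnote in the definition. (Alternatively, one could argue directly via characterization (b): the product of two $G$-finite functions is $G$-finite, since $g\cdot(f_1f_2)=(g\cdot f_1)(g\cdot f_2)$ lies in the finite-dimensional space spanned by products of basis elements of the two orbit-spans.)

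Finite generation then follows because $\RR[M]$ is a quotient of the polynomial ring: writing $I$ for the kernel of the surjective ring homomorphism $\RR[x_1,\dots,x_N]\twoheadrightarrow\RR[M]$, we have $\RR[M]\cong\RR[x_1,\dots,x_N]/I$, and the images of the coordinate functions $x_1\circ\varphi,\dots,x_N\circ\varphi$ generate $\RR[M]$ as an $\RR$-algebra. In particular $\RR[M]$ is a finitely-generated $\RR$-algebra (and, being a quotient of a Noetherian ring, is itself Noetherian, which is used elsewhere in the paper). I do not expect a genuine obstacle here: the substantive analytic input — the existence of the equivariant embedding, the polynomial characterization of algebraic functions, and the sup-norm density — has all been established in the preceding lemmas, so this proof is essentially a matter of assembling \cref{algebraicfunctions}, \cref{supnorm}, and the Hilbert basis theorem. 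The one point requiring a line of care is confirming that the embedding $\varphi$ can be taken with image a genuine algebraic (or at least real-algebraic) subset so that the polynomial representation is well-defined on $M$, but this is subsumed in the cited statement of \cref{algebraicfunctions}(c).
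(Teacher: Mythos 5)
Your proposal is correct and follows essentially the same route as the paper: both identify $\RR[M]$ as the image of the restriction homomorphism $\varphi^*\colon\RR[x_1,\dots,x_N]\to C^0(M)$ via \cref{algebraicfunctions}, deduce finite generation from the coordinate functions, and quote \cref{supnorm} for density. The extra remarks on the $G$-finiteness alternative and Noetherianness are accurate but not needed.
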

\begin{proof}
Let $G$ be the isometry group of $M$, and choose any $G$-equivariant embedding $\varphi\colon M\to\RR^N$, where $G$ acts on $\RR^N$ linearly. By \cref{algebraicfunctions},   $\RR[M]$ is the image of the homomorphism of $\RR$-algebras $\varphi^*:\RR[x_1, \ldots, x_N]\to C^0(M)$. Thus $\RR[M]$ is a finitely-generated $\RR$-algebra. It is dense in $C^0(M)$ in the supremum norm by  \Cref{supnorm}.
\end{proof}

\subsection{Manifold submetries}\label{prelimsub}

In this section we recall the definition and some basic facts about submetries (respectively manifold submetries), and refer the reader to \cite{KL22} (respectively \cite{MR20}) for more details.

Submetries, originally introduced in \cite{Berestovskii87}, are maps $\sigma\colon Y\to X$ between metric spaces which take closed metric balls to closed metric balls of the same radius. The fibers of $\sigma$ form a partition of $Y$ into pairwise equidistant closed subsets, and every such partition comes from a submetry. We will frequently switch between these two points of view when convenient. A function $f\colon Y\to Z$, where $Z$ is a set, is called \emph{basic} when it factors through $\sigma$. A subset $A\subset Y$ is called \emph{saturated} when it is a union of fibers, that is, when $A=\sigma^{-1}(B)$ for some subset $B\subset X$. 

If $Y$ is an Alexandrov space, so is $X$ (see \cite[Proposition 3.1]{KL22}), and, in this case, $\sigma$ has  a well-defined differential $d_p \sigma\colon T_p Y\to T_{\sigma(p)} X$ at every $p\in Y$, where $T_p Y, T_{\sigma(p)} X$ denote the tangent cones, see \cite[Proposition 3.3]{KL22}.

Manifold submetries were originally defined in \cite{CG16}.
\begin{definition}\label{D:mansub}
Let $(M,g)$ be a Riemannian manifold, and $\sigma\colon M\to X$ a submetry onto a metric space $X$. We say $\sigma$ is a \emph{manifold submetry} if, for every $x\in X$, there exists a non-negative integer $k=k(x)$ such that $\sigma^{-1}(x)$ is a possibly disconnected smooth $k$-dimensional embedded submanifold of $M$. We sometimes refer to the fibers of $\sigma$ as ``leaves''.
\end{definition}
We stress that all connected components of a given fiber must have the same dimension, but that different fibers may have different dimensions.

Examples of manifold submetries include: Riemannian submersions, (singular) Riemannian foliations with closed leaves, and actions by (not necessarily connected) closed subgroups of the isometry group\footnote{In the latter two examples, we have given the equidistant decomposition, and the actual submetry map is given by the natural projection onto the leaf/orbit space.}.

\begin{definition}
Let $\sigma\colon M\to X$ be a manifold submetry. The \emph{regular part} in $M$, denoted $M_{\reg}$, is given by the union of all fibers of maximal dimension. 
\end{definition}
The regular part $M_{\reg}$ is an open and saturated subset of $M$, 
which is full-measure, hence dense.

\begin{definition}\label{D:bmc}
Let $\sigma\colon M\to X$ be a manifold submetry. We say $\sigma$ has \emph{basic mean curvature} when, for every $p,q\in M_{\reg}$ with $\sigma(p)=\sigma(q)=x\in X$, we have $d_p\sigma(H_p)=d_q\sigma(H_q)$, where $H$ denotes the mean curvature vector field of the fiber $\sigma^{-1}(x)\subset M$.
\end{definition}

%

\section{Laplacian algebras}
In this section we prove purely algebraic facts about sub-algebras $A\subset \RR[M]$, which are preserved under the Laplacian (which we sometimes call ``Laplacian algebras''), where $M$ is a compact homogeneous space. We will always assume $A$ contains the constants $\RR=E_0$. These will eventually be used in \cref{S:fromlaplacian} to produce a manifold submetry $\sigma$ whose algebra of basic algebraic functions $\B(\sigma)$ is equal to $A$.

\begin{definition}
Let $M$ be a compact homogeneous space, and $Z$ a set of real-valued functions on $M$. Define an equivalence relation on $M$ by $p\sim_Z q$ when $f(p)=f(q)$ for all $f\in Z$. We denote by  $\L(Z)$ the partition of $M$ into the equivalence classes.
\end{definition}
Note that $Z$ can be enlarged to the $\RR$-algebra generated by $Z$, without changing $\L(Z)$. On the other hand, $Z$ can always be shrunk to a finite set by the following, whose proof is taken from \cite[Theorem 2.4.8]{DK}:
\begin{lemma}\label{lemmasep}
Let $M$ be a compact homogeneous space, and $A\subset \RR[M]$ a sub-algebra. Then there exists a finite separating set $\rho_1, \ldots, \rho_k\in A$, in the sense that $\L(A)=\L(\{\rho_1, \ldots, \rho_k\})$. The elements $\rho_1, \ldots, \rho_k\in A$ can be chosen to belong to any given basis of $A$.
\end{lemma}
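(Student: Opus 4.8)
The plan is to prove the finite-separation statement by a Noetherian/ascending-chain argument. The key structural facts I would lean on are that $\RR[M]$ is a finitely-generated $\RR$-algebra by \cref{R[M]}, hence Noetherian (by Hilbert's Basis Theorem), and that $A$, being a sub-algebra, satisfies the ascending chain condition on sub-algebras generated by finitely many elements. More precisely, I want to realize $\L(A)$ — the partition by an algebra — as being detected by finitely many functions, and the mechanism is that separation of points by an algebra stabilizes after finitely many generators are adjoined.

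First I would choose a basis $\{f_i\}_{i \in I}$ of $A$ as an $\RR$-vector space (the statement asks that the $\rho_j$ be selected from a given basis, so I would just fix that basis at the outset). For each finite subset $S \subset I$, let $A_S$ denote the sub-algebra of $A$ generated by $\{f_i : i \in S\}$, and consider the associated partition $\L(A_S) = \L(\{f_i : i \in S\})$; note $\L(A_S)$ refines $\L(A)$ since $A_S \subset A$, and the equivalence classes coincide with those of the finite set $\{f_i : i\in S\}$ by the remark preceding the lemma. As $S$ grows, the sub-algebras $A_S$ form an increasing net, and the corresponding partitions $\L(A_S)$ become finer (or stay the same). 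The goal is to find a single finite $S$ with $\L(A_S) = \L(A)$.

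The main step is to convert ``finer partitions'' into an ascending chain of ideals to which the Noetherian hypothesis applies, following \cite[Theorem 2.4.8]{DK}. The standard trick is to work on $M \times M$: the separating data of a subset $Z \subset \RR[M]$ is encoded by the ideal $I_Z \subset \RR[M \times M] \cong \RR[M] \otimes \RR[M]$ generated by the functions $(p,q) \mapsto f(p) - f(q)$ for $f \in Z$, whose common zero set is exactly the set of pairs $(p,q)$ that are $\sim_Z$-equivalent. Enlarging $Z$ enlarges $I_Z$, giving an ascending chain of ideals $I_{A_{S_1}} \subset I_{A_{S_2}} \subset \cdots$ as we exhaust $A$ by finite subsets. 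Since $\RR[M \times M]$ is a finitely-generated $\RR$-algebra (hence Noetherian), this chain stabilizes, so for some finite $S$ we have $I_{A_S} = I_A$, and therefore the zero sets coincide, i.e. $\sim_{A_S}$ and $\sim_A$ define the same equivalence relation, giving $\L(A_S) = \L(A)$. Setting $\{\rho_1, \ldots, \rho_k\} = \{f_i : i \in S\}$ then completes the proof, and these indeed lie in the chosen basis of $A$.

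I expect the main obstacle to be the bookkeeping that translates ``generates the same partition'' into ``generates the same radical ideal on $M \times M$,'' and in particular verifying that $I_Z$ and $I_{A_{\langle Z\rangle}}$ (the algebra generated by $Z$) have the same zero locus — this is where one uses that passing to the generated algebra does not change $\L$, together with the fact that $f(p)-f(q)$ for $f$ in the algebra lies in the ideal generated by the $f_i(p)-f_i(q)$ via the identity $fg(p)-fg(q) = f(p)(g(p)-g(q)) + (f(p)-f(q))g(q)$. Once that algebraic identity is in hand, the ascending chain argument is routine. A subtle point worth checking is that one only needs equality of zero sets (equivalently equality of radicals), not of the ideals themselves, so the Noetherian stabilization of the ideal chain is more than enough; alternatively one can phrase the whole argument directly in terms of the ascending chain of radical ideals and invoke the Noetherian property there.
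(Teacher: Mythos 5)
Your proposal is correct and follows essentially the same route as the paper: both pass to $\RR[M]\otimes_\RR\RR[M]$ (functions on $M\times M$), form the ideal generated by the differences $1\otimes f-f\otimes 1$ for $f$ in a fixed basis of $A$, and use Noetherianity (via \cref{R[M]} and Hilbert's Basis Theorem) to extract finitely many basis elements whose differences already generate that ideal, hence cut out the same zero locus and the same partition. The only cosmetic difference is that you phrase the finiteness as stabilization of an ascending chain rather than as the standard fact that a finitely generated ideal is generated by a finite subset of any given generating set; your worry about radicals versus ideals is unnecessary, since equality of ideals already gives equality of zero sets.
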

\begin{proof}
Let $B\subset A$ be an $\RR$-basis of $A$. Consider the ideal $I$ in $\RR[M]\otimes_{\RR}\RR[M]$ generated by the elements $1\otimes f - f\otimes 1$, where $f$ runs through all elements of $B$. It follows from \cref{R[M]} and Hilbert's Basis Theorem that $\RR[M]\otimes_\RR\RR[M]$ is Noetherian, and therefore there exist $\rho_1, \ldots, \rho_k\in B$ such that 
\[1\otimes \rho_1 - \rho_1\otimes 1\ , \ \ldots\ , \ 1\otimes \rho_k - \rho_k\otimes 1\] 
generate $I$. Note that $(1\otimes f - f\otimes 1)(p,q)=f(q)-f(p)$, which implies that $\L(A)=\L(\{\rho_1, \ldots, \rho_k\})$.\end{proof}

\begin{definition} Let $M$ be a compact homogeneous space. Given a partition $\F$ of $M$, we denote by $\B(\F)$ the algebra of all elements of $\RR[M]$ which are constant on the elements of $\F$. Given an $\RR$-sub-algebra $A\subset \RR[M]$, we say $A$ is \emph{maximal} when $A=\B(\L(A))$, that is, when for any sub-algebra $A'\subset\RR[M]$ strictly containing $A$, the partition $\L(A')$ is strictly finer than $\L(A)$.
\end{definition}

The following is a generalization of \cite[Theorem A]{MR23}:
\begin{lemma} \label{maximal} Let $M$ be a compact homogeneous space. Let $A\subset \RR[M]$ be an $\RR$-sub-algebra which is preserved by the Laplace--Beltrami operator $\Delta$. Then $A$ is maximal.
\end{lemma}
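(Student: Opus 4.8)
The plan is to prove the nontrivial inclusion $\B(\L(A)) \subseteq A$; the reverse inclusion $A \subseteq \B(\L(A))$ is immediate, since every $f \in A$ is by definition constant on the classes of $\sim_A$. Writing $B := \B(\L(A))$, the idea is to bridge the algebraic description of $B$ and an analytic one, exploiting the two natural topologies on $\RR[M]$ (the sup norm and the $L^2$ norm) together with the $\Delta$-invariance of $A$.

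First I would record the structural consequence of $\Delta$-invariance: $A$ splits as the direct sum of its eigencomponents, $A = \bigoplus_\lambda A_\lambda$ with $A_\lambda := A \cap E_\lambda$. Indeed, if $f = \sum_{i=1}^m f_{\lambda_i} \in A$ with the $\lambda_i$ distinct, then $\Delta^j f = \sum_i \lambda_i^j f_{\lambda_i}$ for $j = 0, \ldots, m-1$, and inverting the Vandermonde matrix $(\lambda_i^j)$ exhibits each $f_{\lambda_i}$ as an $\RR$-linear combination of $f, \Delta f, \ldots, \Delta^{m-1}f \in A$, so $f_{\lambda_i} \in A$. Since each $E_\lambda$ is finite-dimensional, each $A_\lambda$ is a finite-dimensional, hence $L^2$-closed, subspace of $E_\lambda$.

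Next I would give an analytic description of $B$. Choosing, via \cref{lemmasep}, a finite separating set $\rho_1, \ldots, \rho_k \in A$, the map $\rho = (\rho_1, \ldots, \rho_k)\colon M \to \RR^k$ has fibers exactly the classes of $\L(A)$ and identifies the quotient with the compact Hausdorff set $\rho(M) \subset \RR^k$. A continuous basic function on $M$ is the same as a continuous function on $\rho(M)$, and $A$ descends to a subalgebra of $C^0(\rho(M))$ containing the constants and separating points. By the Stone--Weierstrass theorem, $A$ is sup-norm dense in the algebra of all continuous basic functions, i.e. $\overline{A}^{\,C^0} = C^0_{\mathrm{basic}}(M)$. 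Since $B = \RR[M] \cap C^0_{\mathrm{basic}}(M)$, this yields $B = \RR[M] \cap \overline{A}^{\,C^0}$.

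Finally I would close the loop through $L^2$. Given $h \in B$, pick $a_n \in A$ with $a_n \to h$ uniformly; as $M$ has finite volume, $a_n \to h$ also in $L^2$. Applying the continuous $L^2$-orthogonal projection onto $E_\lambda$ and using $a_n \in A = \bigoplus_\mu A_\mu$, the $E_\lambda$-component of $a_n$ lies in $A_\lambda$ and converges to the $E_\lambda$-component $h_\lambda$ of $h$; since $A_\lambda$ is closed, $h_\lambda \in A_\lambda \subseteq A$. As $h \in \RR[M]$ has only finitely many nonzero eigencomponents, summing gives $h \in A$. The point requiring care, and the conceptual heart of the argument, is the interplay of the two topologies: Stone--Weierstrass is inherently a sup-norm statement, whereas extracting eigencomponents is an $L^2$ operation, and it is precisely the $\Delta$-invariance together with the finite-dimensionality of the eigenspaces that prevents the uniform limit from escaping $A$ after projection.
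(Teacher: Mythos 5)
Your proof is correct and rests on the same two pillars as the paper's own argument: the eigenspace decomposition $A=\bigoplus_\lambda(A\cap E_\lambda)$ combined with Stone--Weierstrass density of $A$ in the continuous basic functions, followed by a passage from sup-norm to $L^2$ convergence that exploits the finite-dimensionality (hence $L^2$-closedness) of each $A\cap E_\lambda$. The only cosmetic difference is that you argue directly, projecting a uniform limit onto each eigenspace, whereas the paper argues by contradiction, using Cauchy--Schwarz to show that a nonzero basic algebraic function orthogonal to $A$ could not be sup-norm approximated by elements of $A$.
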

\begin{proof}
By the Laplacian condition, we know $A=\bigoplus_\lambda (A\cap E_\lambda)$, where $E_\lambda$ denotes the $\lambda$-eigenspace of the Laplacian. Let $\sim_A$ denote the equivalence relation on $M$ modulo $A$, $\sigma\colon M\to X=M/\!\sim_A$ the canonical projection, and $A'=\mathcal{B}(\sigma)$ the algebra of algebraic functions which are $\sigma$-basic, that is, that descend to $X$. Then we can define the map $\varphi\colon A\to C^0(X)$ by $\varphi(f)(\sigma(p)):=f(p)$, and $\varphi(A)$ is an algebra separating points in $X$ (in particular $X$ is Hausdorff), and containing the constants, thus by the Stone--Weierstrass theorem (see, e.g. \cite[Theorem 7.32 on page 162]{Rudin}), it is dense in $C^0(X)$ with respect to the supremum norm.

Suppose, for the sake of contradiction, that there is some $f'\in A'\setminus A$. Since $f'\in \RR[M]$, it is contained in a finite direct sum of eigenspaces of $\Delta$, that is, $f'\in E:=\sum_{i=1}^NE_{\lambda_i}$. Since $A\cap E\subseteq A$ is finite-dimensional, $p_{A\cap E}(f')$ exists (where the projection is taken with respect to the $L^2$ norm) and $f:=f'-p_{A\cap E}(f')$ is perpendicular to $A$ in the $L^2$ norm, and$f\neq 0$. By the density of $\varphi(A)$, there is $g\in A$ such that $\|f-g\|_{C^0(M)}=\|\varphi(f)-\varphi(g)\|_{C^0(X)}<\varepsilon$ for any choice of $\varepsilon>0$. Using the Cauchy--Schwarz inequality, we obtain
\[
\langle f,g\rangle= \langle f,f\rangle-\langle f,f-g\rangle\geq \|f\|_{L^2}^2-\|f\|_{L^2}\cdot \|f-g\|_{L^2}
\]
Since $\|f-g\|_{L^2}\leq \sqrt{\vol(M)} \ \|f-g\|_{C^0(M)}$, we can choose $g\in A$ such that $\langle f,g\rangle >0$,  which contradicts the fact that $f\perp A$.

Therefore $A'=A$, that is, $A$ is maximal.
\end{proof}

\begin{definition} Let $A\subset B$ be $\RR$-algebras. An $\RR$-linear projection map $\pi\colon B\to A$ is called a \emph{Reynolds operator} when $\pi(ab)=a\pi(b)$ for all $a\in A$ and $b\in B$.

\end{definition}

\begin{lemma}\label{Reynolds}
 Let $M$ be a compact homogeneous space, and $A\subset \RR[M]$ be a sub-algebra preserved by $\Delta$. Then there exists a Reynolds operator $\pi\colon\RR[M]\to A$.
\end{lemma}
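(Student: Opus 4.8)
The plan is to construct the Reynolds operator explicitly using the eigenspace decomposition of $A$ together with orthogonal projection in the $L^2$ inner product. Since $A$ is preserved by $\Delta$, \cref{maximal}'s proof already observes that $A = \bigoplus_\lambda (A \cap E_\lambda)$, that is, $A$ is a graded subspace compatible with the spectral decomposition of $L^2(M)$. This is the crucial structural input: it lets me define $\pi$ eigenspace-by-eigenspace. For each eigenvalue $\lambda$, let $\pi_\lambda \colon E_\lambda \to A \cap E_\lambda$ be the orthogonal projection (with respect to the restriction of the $L^2$ inner product to the finite-dimensional space $E_\lambda$). Given $f \in \RR[M]$, write $f = \sum_\lambda f_\lambda$ as a finite sum with $f_\lambda \in E_\lambda$, and set $\pi(f) = \sum_\lambda \pi_\lambda(f_\lambda)$. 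By construction $\pi$ is $\RR$-linear, maps into $A$, and restricts to the identity on $A$ (since if $f \in A$ then each $f_\lambda \in A \cap E_\lambda$), so it is a projection onto $A$.

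The main work, and the step I expect to be the genuine obstacle, is verifying the Reynolds identity $\pi(af) = a\,\pi(f)$ for $a \in A$ and $f \in \RR[M]$. The difficulty is that $\pi$ is defined through $L^2$-orthogonal projection, whereas the Reynolds condition is a multiplicative compatibility; these interact only because of special features of the homogeneous geometry. The natural way to reconcile them is to reinterpret $\pi$ as a $G$-equivariant averaging operator. First I would argue that $\pi$ coincides with the $G$-invariant projection supplied by representation theory: since each $E_\lambda$ is a finite-dimensional $G$-representation and $\Delta$ is $G$-equivariant, $A \cap E_\lambda$ is a subrepresentation, and orthogonal projection onto a subrepresentation (with respect to a $G$-invariant inner product such as $L^2$) is the unique $G$-equivariant projection. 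Hence $\pi$ is $G$-equivariant. But $G$-equivariance alone does not give the Reynolds identity, so the heart of the argument must identify $\pi$ with the geometric averaging operator attached to the submetry $\sigma \colon M \to M/\!\sim_A$ built from $A$.

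Concretely, I would introduce the averaging operator $\Av$ that sends $f$ to the function whose value at $p$ is the average of $f$ over the fiber $\sigma^{-1}(\sigma(p))$ (this is exactly the operator that will appear in the analytic part of the paper). Fiber-averaging manifestly satisfies the Reynolds identity, because $\sigma$-basic functions $a \in A$ are constant on fibers and therefore pull out of the fiber-average: $\Av(af) = a\,\Av(f)$. The plan is then to show $\pi = \Av|_{\RR[M]}$. Both operators are $\RR$-linear projections of $\RR[M]$ onto $A$; to see they agree it suffices to check that $\Av$ preserves $\RR[M]$ and each eigenspace, and that on $E_\lambda$ it is $L^2$-orthogonal projection onto $A \cap E_\lambda$. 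The eigenspace-preservation follows once $\Av$ is known to commute with $\Delta$ (this is where basic mean curvature, via \cref{basicmean} and the smoothness of $\Av$, would be invoked), and self-adjointness plus idempotency of $\Av$ on $L^2(M)$ force it to equal orthogonal projection onto its image $A$. Thus $\pi = \Av$, and $\pi$ inherits the Reynolds property from $\Av$.

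An alternative, purely algebraic route avoids geometry entirely. If one can show that $A \cap E_\lambda$ is not merely a subrepresentation but that the complementary isotypical pieces interact with multiplication in a controlled way, one might verify $\pi(af) = a\pi(f)$ directly by expanding $a$ and $f$ in eigenfunctions and using that multiplication by $a \in A$ maps $A$ into $A$. I would attempt this as a fallback: fix $a \in A$, and check the identity on each eigencomponent by testing against an arbitrary $h \in A \cap E_\mu$ using the $L^2$ pairing, reducing the claim to the symmetry relation $\langle af, h\rangle = \langle f, ah\rangle$ together with $ah \in A$. Since multiplication by $a$ is $L^2$-self-adjoint and preserves $A$, this pairing argument shows $\langle \pi(af) - a\pi(f),\, h\rangle = 0$ for all $h \in A$, and because $\pi(af) - a\pi(f)$ lies in $A$, it must vanish. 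I expect this self-adjointness-of-multiplication argument to be the cleanest, so I would lead with it and relegate the averaging-operator interpretation to a remark.
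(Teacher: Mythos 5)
Your lead argument is correct and is essentially the paper's proof: the paper also defines $\pi$ as the $L^2$-orthogonal projection $\oplus_\lambda \pi_\lambda$ using $A=\bigoplus_\lambda(A\cap E_\lambda)$, and verifies the Reynolds identity exactly via your key step, namely that multiplication by $a\in A$ is $L^2$-self-adjoint and preserves $A$, so that $a b_2\perp A$ whenever $b_2\perp A$. One caution about the averaging-operator interpretation you would relegate to a remark: at this point in the paper it would be circular, since knowing that $\Av$ for the partition $\L(A)$ preserves smoothness and commutes with $\Delta$ requires $\L(A)$ to come from a manifold submetry with basic mean curvature, which is precisely what \cref{T:fromlaplacian} (whose proof uses \cref{Reynolds} via \cref{FF}) establishes.
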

\begin{proof}
Since $A$ is preserved by the Laplacian $\Delta$, we have $A=\bigoplus_\lambda (A\cap E_\lambda)$, where $E_\lambda$ denotes the $\lambda$-eigenspace of the Laplacian. For each $\lambda$, let $\pi_\lambda\colon E_\lambda\to A\cap E_\lambda$ be the $L^2$-orthogonal projection, and define $\pi\colon \RR[M]\to A$ by $\pi=\oplus_{\lambda}\pi_{\lambda}$. Since eigenspaces are pairwise $L^2$-orthogonal, it follows that $\pi$ is the $L^2$-orthogonal projection of $\RR[M]$ onto $A$, in the sense that $\ker(\pi)=\oplus_\lambda \ker(\pi_\lambda)$ is orthogonal to $A$, and $\RR[M]=A\oplus \ker(\pi)$.

Let $a\in A$ and $b\in \RR[M]$. Write $b=b_1+b_2$, where $b_1=\pi(b)\in A$ and $b_2 \perp A$. Then 
\[\pi(ab)=\pi(a b_1 +a b_2)=ab_1 +\pi(a b_2)=a\pi(b) +\pi (a b_2),\]
where, in the second equality, we have used that $A$ is an algebra, hence $a b_1 \in A$.
Thus, to show $\pi$ is a Reynolds operator, it suffices to show that $a b_2\perp A$. Let $f\in A$. Then
\[\langle f, a b_2\rangle=\int f a b_2 = \langle f a, b_2\rangle =0\]
because $f a\in A$ (since $A$ is an algebra) and $b_2\perp A$.
\end{proof}

\begin{lemma}\label{FF}Let $M$ be a compact homogeneous space, and $A\subset \RR[M]$ be a sub-algebra preserved by $\Delta$. Then $A=\RR[M]\cap F(A)$, where $F(A)$ denotes the field of fractions of $A$.
\end{lemma}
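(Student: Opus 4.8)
The inclusion $A\subseteq\RR[M]\cap F(A)$ is immediate, so the content is the reverse containment. Before anything else I would record that $\RR[M]$ is an integral domain: since $M=G/H$ carries a real-analytic structure in which the metric is analytic, the Laplacian has analytic coefficients, so its eigenfunctions are real-analytic, and hence every element of $\RR[M]$ is real-analytic. On a connected manifold the zero set of a nonzero real-analytic function is nowhere dense, so a product of two nonzero elements of $\RR[M]$ cannot vanish identically. This makes the field of fractions $F(\RR[M])$ well-defined, and the inclusion $A\hookrightarrow\RR[M]$ induces $F(A)\hookrightarrow F(\RR[M])$, so the asserted equality is taking place inside the single field $F(\RR[M])$.

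The plan for the reverse inclusion is to use the Reynolds operator $\pi\colon\RR[M]\to A$ from \cref{Reynolds} as a cancellation device. Let $h\in\RR[M]\cap F(A)$, and write $h=p/q$ with $p,q\in A$ and $q\neq 0$. Clearing denominators inside $F(\RR[M])$ gives the identity $qh=p$, which is an equation between genuine elements of $\RR[M]$ since $h\in\RR[M]$. Applying $\pi$ and using the Reynolds property $\pi(ab)=a\pi(b)$ with $a=q\in A$ and $b=h\in\RR[M]$, together with $\pi(p)=p$ (as $p\in A$ and $\pi$ is a projection onto $A$), I obtain
\[
q\,\pi(h)=\pi(qh)=\pi(p)=p=qh.
\]
Hence $q\bigl(\pi(h)-h\bigr)=0$, and since $\RR[M]$ is an integral domain and $q\neq 0$, this forces $\pi(h)=h$. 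As $\pi(h)\in A$, I conclude $h\in A$, completing the proof.

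The argument is essentially formal once the two ingredients are in place, so there is no single hard computation; the main point to get right is the domain property of $\RR[M]$, which is what legitimizes both the formation of fraction fields and the final cancellation of $q$. I would also note that maximality of $A$ (\cref{maximal}) is not needed here—the Reynolds operator alone suffices—although both facts are used in tandem in the subsequent geometric construction.
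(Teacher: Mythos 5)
Your proof is correct and follows essentially the same route as the paper: write $h=p/q$ with $p,q\in A$, apply the Reynolds operator of \cref{Reynolds} to $qh=p$, and cancel $q$. The only difference is that you make explicit the integral-domain property of $\RR[M]$ (via real-analyticity of eigenfunctions), which the paper uses implicitly when forming $F(A)$ and dividing by $g$; this is a welcome but minor addition.
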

\begin{proof}
Since a Reynolds operator exists by \cref{Reynolds}, the proof in \cite[Lemma 24(b)]{MR20} applies. Indeed, one inclusion is clear, and for the other, let $f,g\in A$ and $h\in \RR[M]$  such that $h=f/g$. Then $f=gh$, and thus, applying a Reynolds operator $\pi$ on both sides yields $f=g\pi(h)$, which implies $f/g\in A$.
\end{proof}

\section{Basic algebraic functions}
In this section we introduce the algebra of basic algebraic functions $\B(\sigma)$ associated to a manifold submetry $\sigma$ on a compact homogeneous space, which generalizes the algebra of invariant functions from classical Invariant Theory. We show (\cref{thmalgebraic} below) that, when the manifold submetry  $\sigma$ has basic mean curvature, this algebra is ``large enough'' to separate fibers, and hence  that $\sigma$ is algebraic, which eventually yields \cref{normal}(a) from the Introduction.

\begin{definition}
Let $(M,g)$ be a compact homogeneous space, and $\sigma\colon M\to X$ be a manifold submetry (see \cref{prelimsub}). The algebra of \emph{basic algebraic functions} consists of all algebraic functions (see \cref{prelimhom}) which are constant on the fibers of $\sigma$, and is denoted by $\B(\sigma)\subset \RR[M]$.
\end{definition}

\begin{definition}
Let $(M,g)$ be a compact homogeneous space, and $\sigma\colon M\to X$ be a manifold submetry. We say $\sigma$ is \emph{algebraic} when there are basic eigenfunctions $\rho_1, \ldots, \rho_k\in\B(\sigma)$ such that, whenever $p,q\in M$ are such that $\sigma(p)\neq \sigma(q)$, there exists $i$ such that $\rho_i(p)\neq \rho_i(q)$. A set $\{\rho_1, \ldots, \rho_k\}\subset\B(\sigma)$ satisfying this property is called a (finite) \emph{separating set} for $\sigma$.
\end{definition}

The following clarifies the definition and immediate consequences of algebraicity of manifold submetries.
\begin{proposition}\label{P:algebraic}
Let $(M,g)$ be a compact homogeneous space, and $\sigma\colon M\to X$ be a manifold submetry. Assume $\sigma$ is algebraic, with separating set $\rho_1, \ldots, \rho_k\in\B(\sigma)$. Let $\rho=(\rho_1, \ldots, \rho_k)$. Then:
\begin{enumerate}[(a)]
 \item The map $\rho\colon M\to \RR^k$  descends to a homeomorphism $X\to\rho(M)$.
 \item $\rho(M)$ is a semi-algebraic subset of $\RR^k$. 
 \item Every $\sigma$-fiber is an algebraic variety in the sense that it corresponds to an algebraic variety in $\RR^N$ under any $G$-equivariant embedding $M\to\RR^N$ where $G$ acts linearly on $\RR^N$.
\end{enumerate}
\end{proposition}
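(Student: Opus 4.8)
The plan is to prove the three statements in the order (a), (b), (c), since (a) sets up the topological picture that (b) refines and (c) reinterprets algebraically. Throughout I will use that $\rho_1, \ldots, \rho_k$ is a separating set, so by definition $\rho(p) = \rho(q)$ if and only if $\sigma(p) = \sigma(q)$; this says exactly that the partition $\L(\{\rho_1, \ldots, \rho_k\})$ coincides with the partition into $\sigma$-fibers. Since each $\rho_i$ is basic (constant on fibers), the map $\rho = (\rho_1, \ldots, \rho_k) \colon M \to \RR^k$ factors through $\sigma$ as $\rho = \bar\rho \circ \sigma$ for a unique map $\bar\rho \colon X \to \RR^k$, and the separating property is precisely the injectivity of $\bar\rho$.

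For part (a), I would show $\bar\rho \colon X \to \rho(M)$ is a homeomorphism onto its image. Injectivity is the separating property just noted, and surjectivity onto $\rho(M)$ is automatic since $\rho(M) = \bar\rho(\sigma(M)) = \bar\rho(X)$. Continuity of $\bar\rho$ follows from continuity of $\rho$ together with the fact that $\sigma$ is a quotient map (submetries are open continuous surjections, so $X$ carries the quotient topology). For continuity of the inverse, the key point is that $\sigma \colon M \to X$ is a proper map of a compact space, hence $X$ is compact; then $\bar\rho$ is a continuous bijection from the compact Hausdorff space $X$ onto the Hausdorff space $\rho(M) \subset \RR^k$, and any such map is automatically a homeomorphism. (That $X$ is Hausdorff follows because the continuous injection $\bar\rho$ into $\RR^k$ separates its points.)

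For part (b), I would invoke the fact that each $\rho_i$ is algebraic, so by \cref{algebraicfunctions} there is a $G$-equivariant embedding $\varphi \colon M \to \RR^N$ and polynomials $P_i$ with $\rho_i = P_i \circ \varphi$. The image $\varphi(M)$ is a compact real-algebraic (indeed real-analytic) subset of $\RR^N$, and $\rho(M) = (P_1, \ldots, P_k)(\varphi(M))$ is the image of a real-algebraic set under a polynomial map. By the Tarski--Seidenberg theorem, the image of a semi-algebraic set under a polynomial map is semi-algebraic, so it suffices to know $\varphi(M)$ is semi-algebraic; this holds because $\varphi(M)$ is itself cut out by polynomial equations (the embedding realizes $M$ as a real-algebraic variety, being a compact homogeneous space sitting inside a linear representation). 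Hence $\rho(M)$ is semi-algebraic.

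For part (c), fix a $G$-equivariant embedding $M \to \RR^N$ with $G$ acting linearly, and let $x \in X$ with fiber $\sigma^{-1}(x)$. By the separating property, $\sigma^{-1}(x) = \rho^{-1}(\bar\rho(x)) = \{ p \in M : \rho_i(p) = c_i \text{ for all } i\}$ where $c_i = \bar\rho(x)_i$. Writing each $\rho_i = P_i \circ \varphi$ as in \cref{algebraicfunctions}, the fiber is the intersection with $\varphi(M)$ of the common zero locus of the polynomials $P_i - c_i$; since $\varphi(M)$ is itself an algebraic subset of $\RR^N$, this intersection is an algebraic variety in $\RR^N$, which is the assertion. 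I expect the main obstacle to be part (b): one must be careful that the relevant embedded image $\varphi(M)$ really is semi-algebraic (not merely a smooth compact submanifold) so that Tarski--Seidenberg applies, and this relies on the algebraic nature of $M$ as a $G$-orbit in a linear representation rather than on any soft topological input.
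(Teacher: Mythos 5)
Your proposal is correct and follows essentially the same route as the paper: for (a) a continuous bijection from the compact space $X$ to the Hausdorff set $\rho(M)$, for (b) the $G$-equivariant embedding of \cref{algebraicfunctions} realizing $\varphi(M)$ as a $G$-orbit (hence an algebraic variety) together with Tarski--Seidenberg, and for (c) the observation that each fiber is $\varphi(M)$ intersected with the zero locus of the polynomials $P_i-c_i$. The paper's proof of (c) is just the remark that it is clear from the proof of (b), and your spelled-out version is exactly the intended argument.
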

\begin{proof}
(a) From the definition of submetry, the metric topology on $X$ coincides with the quotient topology associated with $\sigma\colon M\to X$. Thus the induced map $\bar{\rho}\colon X\to\rho(M)$ is continuous. Since $\bar{\rho}$ is bijective, $X$ is compact, and $\rho(M)$ is Hausdorff, we conclude that $\bar{\rho}$ is a homeomorphism.

To prove (b), we use \cref{algebraicfunctions}. Let $G$ be a compact Lie group acting transitively on $M$, and $\varphi\colon M\to\RR^N$ be a $G$-equivariant embedding, where $G$ acts linearly on $\RR^N$. Then $\rho_i=P_i\circ \varphi$ for certain polynomials $P_i\colon \RR^n\to\RR$, and $\rho(M)=P(\varphi(M))$, where $P\colon\RR^N\to\RR^k$ is given by $P=(P_1, \ldots, P_k)$. Since $\varphi(M)$ is a $G$-orbit, it is an algebraic variety in $\RR^N$. Thus $P(\varphi(M))$ is a semi-algebraic subset, by Tarski's Quantifier Elimination Theorem.

(c) is clear from the proof of (b).
\end{proof}

The main result in this section is the following theorem, most of whose proof follows along the same lines as that of \cite[Theorem 1.1]{LR18} and \cite[Theorem 19]{MR20}. The notable exception is the last step, namely existence of a finite separating set, cf. \cref{separatingset}.
\begin{theorem}\label{thmalgebraic}
Let $(M,g)$ be a compact homogeneous space, and $\sigma\colon M\to X$ be a manifold submetry. Assume $\sigma$ has basic mean curvature. Then $\B(\sigma)$ is a sub-algebra of $\RR[M]$ preserved by the Laplace--Beltrami operator $\Delta$, and $\sigma$ is algebraic.
\end{theorem}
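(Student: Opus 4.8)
The plan is to split the statement into its two assertions. That $\B(\sigma)$ is a sub-algebra containing the constants is immediate and requires no hypothesis on $\sigma$: it is the intersection of the $\RR$-algebra $\RR[M]$ (\cref{R[M]}) with the algebra of functions that are constant on the fibers, and both are closed under multiplication. The content lies in showing (i) that $\B(\sigma)$ is $\Delta$-invariant, and (ii) that $\sigma$ admits a finite separating set of basic eigenfunctions.

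For (i) the engine is the \emph{averaging operator} $\Av$, where $\Av(f)(p)$ is the mean value of $f$ over the (compact) fiber through $p$ with respect to the induced Riemannian volume. By construction $\Av$ extends to a sup-norm contraction on bounded functions, it fixes basic functions pointwise, and it is the $L^2$-orthogonal projection onto the basic functions; in particular a function is basic exactly when it is fixed by $\Av$. The crucial — and, I expect, only genuinely hard — input is that $\Av$ maps $C^\infty(M)$ into $C^\infty(M)$ and commutes with $\Delta$. This is precisely where the basic-mean-curvature hypothesis is used: in the divergence/co-area computation comparing $\Av\Delta$ and $\Delta\Av$ on the fibers, the fiberwise mean curvature is the only potentially non-basic term, so its basicness forces $\Av\Delta=\Delta\Av$, following the elliptic-regularity arguments of \cite{LR18}. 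Granting this, commutation forces $\Av$ to preserve each eigenspace $E_\lambda$, hence $\RR[M]=\bigoplus_\lambda E_\lambda$, so that $\Av(\RR[M])\subseteq\B(\sigma)$. Then for $f\in\B(\sigma)$ one has $f=\Av f$ and $\Delta f\in\RR[M]$, whence $\Delta f=\Delta\Av f=\Av\Delta f$ is basic and therefore lies in $\B(\sigma)$.

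For (ii) I would first show $\B(\sigma)$ separates fibers. Given $p,q$ with $\sigma(p)\neq\sigma(q)$, pull back $x\mapsto d_X(x,\sigma(p))$ through $\sigma$ to a continuous basic function $h$ on $M$ with $h(p)=0\neq h(q)$. By \cref{supnorm} pick $f\in\RR[M]$ with $\|f-h\|_\infty$ small; since $\Av$ fixes $h$ pointwise and is a sup-norm contraction, $\|\Av f-h\|_\infty=\|\Av(f-h)\|_\infty\leq\|f-h\|_\infty$ is small, and $\Av f\in\B(\sigma)$ by (i). For a good enough approximation $\Av f$ separates $p$ from $q$, so $\L(\B(\sigma))$ is exactly the partition into $\sigma$-fibers. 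Finally, $\Delta$-invariance gives $\B(\sigma)=\bigoplus_\lambda(\B(\sigma)\cap E_\lambda)$, so $\B(\sigma)$ has a basis of eigenfunctions; applying \cref{lemmasep} to this basis produces finitely many basic eigenfunctions $\rho_1,\dots,\rho_k\in\B(\sigma)$ with $\L(\{\rho_1,\dots,\rho_k\})=\L(\B(\sigma))$, which is the desired finite separating set, so $\sigma$ is algebraic.

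In summary, the only step demanding real work is the analytic claim in (i) that $\Av$ preserves smoothness and commutes with $\Delta$, as everything downstream is formal once that is available; this is also the single point at which \emph{basic mean curvature} is invoked. The finite-separating-set step — the one departure from the spherical treatment in \cite{LR18,MR20} — is here reduced to \cref{lemmasep} together with the choice of an eigenfunction basis, so it presents no essential obstacle.
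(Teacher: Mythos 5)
Your proposal is correct and follows essentially the same route as the paper's proof: the algebra structure is immediate, $\Delta$-invariance comes from the commutation $\Av\Delta=\Delta\Av$ (the paper's \cref{average}, which is where basic mean curvature enters), separation of fibers comes from sup-norm approximation of a continuous basic function by $\RR[M]$ followed by averaging, and finiteness of the separating set comes from choosing an eigenfunction basis and applying \cref{lemmasep}. The only cosmetic difference is your choice of the pulled-back distance function as the basic test function, where the paper takes an arbitrary continuous function separating the two fibers and averages it first.
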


We note that compactness of $M$ is necessary in \cref{thmalgebraic}:
\begin{example}\label{E:non-algebraic}
Consider $M=\RR$ and $\sigma$ given by the quotient map $\RR\to\ \RR/\Z$. The fibers (orbits) are infinite discrete subsets, hence not algebraic varieties. 

For a similar example with connected fibers, consider $M=\RR^3$ and $\sigma$ given by the quotient map $\RR^3\to\ \RR^3/\RR$, where $\RR$ acts on $\RR^3$ by isometries via a ``corkscrew'' motion, where $t\in\RR$ is mapped to a rotation by $t$ in the $xy$-plane (fixing the $z$-axis) followed by translation by $t$ in the direction of the $z$-axis. Then, the orbit through any $p\in\RR^3$ not on the $z$-axis is a helix which is not an algebraic variety because its intersection with any plane containing the $z$-axis is an infinite discrete subset. 
\end{example}

Before providing a proof of \cref{thmalgebraic}, we recall the definition and basic facts regarding the averaging operator.

\begin{definition}
Let $M$ be a Riemannian manifold, and $\sigma\colon M\to X$  a manifold submetry with compact fibers. The \emph{averaging operator} is the linear map $\Av\colon L^2(M)\to L^2(M)$ defined by
\[\Av(f)(p)=\frac{1}{\vol(L_p)}\int_{q\in L_p}f(q) \ d\vol_{L_p}(q).\]
Here $L_p$ denotes the $\sigma$-fiber through $p$;  $d\vol_{L_p}$ denotes the Riemannian volume form on $L_p$ induced by the restricted Riemannian metric from $M$; and $\vol(L_p)=\int_{q\in L_p}1 \ d\vol_{L_p}(q)$ denotes the corresponding volume.
\end{definition}

\begin{lemma}\label{average}
Let $\sigma\colon M\to X$ be a manifold submetry with compact fibers and basic mean curvature. Then the following hold:
\begin{enumerate}[(a)]
\item The averaging operator is well-defined, and is the $L^2$-orthogonal projection onto the subspace of $L^2(M)$ consisting of the elements with a basic representative. 
\item For every $f\in C^{\infty}(M)$, the element $\Av (f)\in L^2(M)$ has a smooth representative.
\item The operators $\Delta, \Av \colon C^{\infty}(M)\to C^{\infty}(M)$ commute.
\end{enumerate}
\end{lemma}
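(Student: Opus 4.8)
The plan is to prove the three statements about the averaging operator $\Av$, with the main conceptual work concentrated in part (a), after which (b) and (c) follow from elliptic regularity and a symmetry argument respectively.

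\medskip

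\textbf{Part (a).} First I would verify well-definedness: since $\sigma$ has compact fibers, each leaf $L_p$ has finite volume, so $\Av(f)(p)$ is a finite integral; as $f$ ranges over $L^2(M)$ one checks via Fubini (using a coarea-type decomposition of $M$ into leaves) that $\Av(f)\in L^2(M)$ and that $\|\Av(f)\|_{L^2}\le \|f\|_{L^2}$, so $\Av$ is bounded. The operator is manifestly idempotent ($\Av\circ\Av=\Av$, since averaging an already-basic function returns it) and its image consists exactly of (classes with) basic representatives. To identify $\Av$ as the \emph{orthogonal} projection onto the basic subspace, I would show it is self-adjoint, i.e. $\langle \Av(f),h\rangle=\langle f,\Av(h)\rangle$; equivalently, that $\langle f-\Av(f),\,b\rangle=0$ for every basic $b$. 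This reduces, fiber-by-fiber, to the statement that on each leaf the average of $f$ against the constant $b$ equals the fiberwise integral of $f$ times $b$ — a consequence of the coarea decomposition, provided the measures disintegrate correctly along fibers. The point requiring the basic-mean-curvature hypothesis is subtle here and is really needed for part (c), so in (a) the self-adjointness follows from the disintegration of the Riemannian measure over the leaf space.

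\medskip

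\textbf{Part (b).} Given $f\in C^\infty(M)$, I want a smooth representative of $\Av(f)$. The standard approach, following \cite{LR18}, is to combine parts (a) and (c): once we know $\Av$ commutes with $\Delta$ (part (c)) and $\Av(f)\in L^2(M)$, we can argue that $\Av(f)$ is a weak solution of elliptic equations inherited from $f$. Concretely, since $\Delta\,\Av(f)=\Av(\Delta f)$ and $\Delta f$ is again smooth, iterating shows $\Delta^k\,\Av(f)=\Av(\Delta^k f)\in L^2(M)$ for all $k$, so $\Av(f)$ lies in every Sobolev space $H^{2k}(M)$; by the Sobolev embedding theorem $\Av(f)\in C^\infty(M)$. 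Thus (b) is really a corollary of (c) plus elliptic regularity, and I would prove them in that logical order (establishing the commutation identity first as an identity in $L^2$ or in the distributional sense).

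\medskip

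\textbf{Part (c).} This is where basic mean curvature enters essentially, and it is the main obstacle. The identity $\Delta\,\Av(f)=\Av(\Delta f)$ amounts to commuting the Laplacian with fiberwise averaging. The natural strategy is to split $\Delta$ on $M$ near a regular leaf into a ``vertical'' (along-the-fiber) part and a ``horizontal'' (transverse) part, plus a first-order cross term involving the mean curvature of the fibers. Averaging kills the purely vertical Laplacian (integration by parts on the closed fiber $L_p$) and commutes trivially with the horizontal second-order part; the only obstruction is the first-order term, whose contribution to $\Delta(\Av f)-\Av(\Delta f)$ is governed precisely by the projected mean curvature vector $d\sigma(H)$. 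The basic-mean-curvature hypothesis guarantees this projected field is well-defined on the base, i.e. genuinely $\sigma$-basic, which is exactly what is needed for the mean-curvature term to commute through the averaging. Making this rigorous requires care because of the singular strata: I would first establish the identity on $M_{\reg}$, where $\sigma$ is a genuine Riemannian submersion onto a manifold and the above decomposition is clean, obtaining $\Delta\,\Av(f)=\Av(\Delta f)$ as smooth functions there; then, since $M_{\reg}$ is open, dense, and full-measure, and both sides are (by part (b)'s regularity bootstrap, applied on $M_{\reg}$) controlled in $L^2$, I would upgrade the identity to all of $M$ by density. The delicate point — and the step I expect to be hardest — is the interchange of $\Delta$ with the fiber integral across the lower-dimensional singular set, ensuring no distributional mass is lost there; I would handle this by testing against smooth functions and using that the singular set has measure zero together with the $H^{2k}$-regularity already obtained.
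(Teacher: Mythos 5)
Your overall route is the same as the paper's (which defers the details to \cite[Section 3]{LR18}): disintegration of the Riemannian measure over the leaf space gives (a), the commutation of $\Delta$ and $\Av$ is first established on $M_{\reg}$ via the vertical/horizontal splitting with the mean-curvature first-order term controlled by the basic-mean-curvature hypothesis, and elliptic regularity then upgrades $\Av(f)$ to a smooth function. Your inversion of the logical order of (b) and (c) is harmless: in both treatments the real intermediate statement is the commutation identity for smooth functions on $M_{\reg}$, from which (b) follows by a weak-solution argument and (c) then holds globally by density.

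There is, however, one genuine gap in the step you yourself flag as the hardest. To pass the identity $\Delta\,\Av(f)=\Av(\Delta f)$ across the singular set $M\setminus M_{\reg}$ --- i.e.\ to show $\Av(f)$ is a \emph{global} weak solution of $\Delta u=\Av(\Delta f)$ --- it is not enough that the singular set has measure zero, which is all you invoke. A codimension-one set has measure zero but can carry distributional mass for $\Delta$ (a function with a jump across a hypersurface is a counterexample), so ``testing against smooth functions and using measure zero'' does not close the argument; and appealing to the $H^{2k}$-regularity of $\Av(f)$ at this stage is circular, since that regularity is exactly what the weak-solution argument is meant to produce. The fact actually needed, and the one the paper isolates, is that $M\setminus M_{\reg}$ is a union of submanifolds of codimension at least two, hence has vanishing $(n-1)$-dimensional Hausdorff measure; this lets one choose cutoff functions vanishing near the singular set whose gradients have negligible $L^2$ cost, so that integration by parts on $M_{\reg}$ yields the weak identity on all of $M$ (cf.\ \cite[Theorem 3.3]{LR18}). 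For manifold submetries this codimension bound is not automatic from the definition: it comes from the stratification results of \cite[Appendix B]{MR20}, in particular the absence of codimension-one strata (\cite[Lemma 47]{MR20}). Your proof should cite or reprove this input; without it the extension across the singular set, and hence (b) and (c), does not follow.
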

\begin{proof}
The proofs of these statements are analogous to the proofs in the special case where $\sigma$ is given by a singular Riemannian foliation, see \cite[Section 3]{LR18} (see also \cite{PR96}), so we only sketch the arguments and point out the key fact about manifold submetries that is needed. Namely: the complement of the regular part $M_{\reg}$ (that is, the union of the fibers of maximal dimension) is a closed subset which is the union of 
 submanifolds of codimension at least two. This follows from the results on stratification in \cite[Appendix B]{MR20}. More precisely, by \cite[Remark 46]{MR20}, $M_{\reg}$ is open, and by \cite[Proposition 43]{MR20} every stratum is a smooth submanifold. By \cite[Lemma 47]{MR20}, there are no strata of codimension one, thus $M\setminus M_{\reg}$ is the union of strata of codimension at least two.

Since $M_{\reg}$ has full-measure, the results analogous to \cite[Sections 3.1 and 3.2]{LR18} are proved in essentially the same way. In particular this yields (a), and the fact that the averaging operator and the Laplace--Beltrami operator commute for smooth functions $M_{\reg}\to\RR$.

The proof of (b) follows the same arguments as \cite[Theorem 3.3]{LR18}, where the key fact about $M\setminus M_{\reg}$ used is that it has codimension at least two in $M$, in particular it has vanishing $(n-1)$-dimensional Hausdorff measure.

(c)  follows from (b) and the fact that $\Av$ and $\Delta$ commute for functions on $M_{\reg}$.
\end{proof}

\begin{proof}[Proof of \cref{thmalgebraic}]
The subset $\B(\sigma)\subset\RR[M]$ is an $\RR$-sub-algebra by definition.

Let $f\in \B(\sigma)$. Then $f$ is a finite sum of eigenfunctions by definition, and hence $\Delta(f)$ is also a finite sum of eigenfunctions. Moreover, $\Delta(f)$ is basic by the definition of the averaging operator and by \cref{average}. Indeed, a function is basic if and only it is equal to its average, and $\Av(\Delta (f))=\Delta (\Av(f))=\Delta(f)$. Therefore $\B(\sigma)$ is preserved by $\Delta$.

Next we show that $\B(\sigma)$ separates $\sigma$-fibers. Let $L, L'$ be two distinct fibers, and choose $f\in C^0(M)$ which is constant equal to $0$ on $L$ and constant equal to $1$ on $L'$. After averaging, we may assume $f$ is basic. Choose $\varepsilon \in(0,1/2)$. By \cref{R[M]}, there is $h\in\RR[M]$ which is $\varepsilon$-close to $f$ in the supremum norm. Thus $\Av(h)$ is $\varepsilon$-close to $f$, because $f$ is basic, and hence $\Av(h)$ has different values on $L$ and $L'$. Moreover, $\Av(h)\in\B(\sigma)$ because $\Delta,\Av$ commute. Therefore  $\B(\sigma)$ separates $\sigma$-fibers.

Since $\B(\sigma)$ is preserved by $\Delta$, it has an $\RR$-basis of eigenfunctions. By \cref{lemmasep},  $\sigma$ is algebraic.
\end{proof}

\begin{remark}\label{separatingset}
If $\B(\sigma)$ were finitely generated, the last part of the proof above would be simplified, because any generating set $\rho_1, \ldots, \rho_k$ would be a separating set. Since we do not know if $\B(\sigma)$ is finitely generated (see \cref{Q:fg}), we have instead employed \cref{lemmasep}.
\end{remark}

\begin{remark}
\cref{thmalgebraic} can be partially generalized to the situation where $(M,g)$ is assumed to be a \emph{real-analytic} (instead of homogeneous) closed Riemannian manifold. Namely, there exists a \emph{finite} set $\rho_1, \rho_2, \ldots, \rho_k$ of eigenfunctions of the Laplace--Beltrami operator which separates $\sigma$-fibers. The proof follows along the same lines as the proof of \cref{thmalgebraic} above, with a couple of modifications. First, the basic eigenfunctions of $(M,g)$ are real-analytic (being solutions of PDE's with real-analytic coefficients, see \cite[\S 2.2]{LMP23}). Second, in the proof of \cref{lemmasep}, one uses the fact (see \cite[Th\'eor\`eme (I,\ 9)]{Frisch67})
that the ring of all globally-defined real-analytic functions on $M\times M$ is Noetherian.
\end{remark}

\section{From Laplacian algebras to manifold submetries}
\label{S:fromlaplacian}

The main goal of this section is to identify, given a compact homogeneous space $M$, the sub-algebras of the algebra $\RR[M]$ of algebraic functions which arise as the algebra of basic algebraic functions $\B(\sigma)$ for some  manifold submetry $\sigma$ with basic mean curvature. They are precisely the sub-algebras preserved by the Laplace--Beltrami operator. This eventually leads to the one-to-one correspondence in \cref{normal}(b).
\begin{theorem} \label{T:fromlaplacian}
Let $M$ be a compact homogeneous space, and $A\subset \RR[M]$ a sub-algebra preserved by the Laplacian. Then there exists a manifold submetry $\sigma\colon M\to X$ with basic mean curvature, such  that $\B(\sigma)=A$. 
\end{theorem}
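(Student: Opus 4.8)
The plan is to realize $A$ as the algebra of basic algebraic functions of the partition of $M$ into the level sets of a finite separating set, after upgrading this partition to a genuine manifold submetry. First I would apply \cref{lemmasep} to extract $\rho_1, \ldots, \rho_k\in A$ with $\L(A)=\L(\{\rho_1, \ldots, \rho_k\})$, and assemble $\rho=(\rho_1, \ldots, \rho_k)\colon M\to\RR^k$. Since the $\rho_i$ are eigenfunctions, hence real-analytic, the locus $M_0\subset M$ where $d\rho$ attains its maximal rank is open, dense, and full-measure, and the level sets of $\rho$ meeting $M_0$ (which are exactly the generic classes of $\L(A)$) are smooth embedded submanifolds of constant dimension. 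Thus $\rho_0:=\rho|_{M_0}\colon M_0\to X_0:=\rho(M_0)$ is a submersion onto a smooth manifold.

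The crucial observation is that both the descent of the metric and the basic-mean-curvature property reduce to the polarization identity $\langle\nabla f,\nabla h\rangle=\tfrac12\bigl(f\Delta h+h\Delta f-\Delta(fh)\bigr)$. Since $A$ is an algebra preserved by $\Delta$, taking $f=\rho_i$, $h=\rho_j$ shows $g_{ij}:=\langle\nabla\rho_i,\nabla\rho_j\rangle\in A$, so the horizontal Gram data is basic and the pushforward metric on $X_0$ is well-defined, making $\rho_0$ a Riemannian submersion. Taking $f=\rho_l$, $h\in A$ shows $\nabla\rho_l(h)\in A$, i.e. $A$ is closed under the derivations $\nabla\rho_l$. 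Now the mean curvature vector $H$ of a fiber is horizontal, and a short computation splitting $\operatorname{tr}\operatorname{Hess}\rho_i$ into vertical and horizontal parts gives $d\rho_i(H)=g(\nabla\rho_i,H)=\Delta\rho_i+\operatorname{tr}_{\mathcal H}\operatorname{Hess}\rho_i$; expressing $\operatorname{tr}_{\mathcal H}\operatorname{Hess}\rho_i=\sum_{l,m}(G^{-1})_{lm}\operatorname{Hess}\rho_i(\nabla\rho_l,\nabla\rho_m)$ and noting (via the closure under $\nabla\rho_l$ together with the Koszul-type identity for $\nabla\rho_l(g_{im})$) that each $\operatorname{Hess}\rho_i(\nabla\rho_l,\nabla\rho_m)\in A$, we conclude $d\rho_i(H)\in F(A)$. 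Functions in $F(A)$ are constant on the classes of $\L(A)$, so $d\rho(H)$ is basic on $M_0$; since the $\rho_i$ separate fibers this yields the basic-mean-curvature condition of \cref{D:bmc} for the eventual submetry on its regular part.

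Third, I would pass to the closure: let $\hat{X}$ be the closure of $X_0$ equipped with the induced length metric, and $\hat{\rho}\colon M\to\hat{X}$ the resulting map. Because $\rho_0$ is a Riemannian submersion on the dense, full-measure set $M_0$ its fibers are equidistant there, and by continuity $\hat{\rho}$ is a submetry on all of $M$. The remaining, and I expect \emph{main}, obstacle is to show that $\hat{\rho}$ is a \emph{manifold} submetry, i.e. that the possibly-singular fibers lying outside $M_0$ are again smooth embedded submanifolds. This is exactly the point at which I would invoke the criterion of \cite{LW24}, which is the key external input; verifying that our submetry meets its hypotheses (and that $M_0$ is its regular part) is where the technical care is concentrated.

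Finally, I would identify $\B(\hat{\rho})=A$. The inclusion $A\subseteq\B(\hat{\rho})$ is immediate, since the fibers of $\hat{\rho}$ are the level sets of $\rho$, so every $f\in A$ is constant on the classes of $\L(A)=\L(\{\rho_i\})$ and is therefore $\hat{\rho}$-basic and algebraic. For the reverse inclusion, any $f\in\B(\hat{\rho})$ is an algebraic function constant on the generic fibers of $\rho$ (these agree with the $\hat{\rho}$-fibers on $M_0$), and a Rosenlicht-type argument on rational invariants forces $f\in F(\{\rho_1,\ldots,\rho_k\})\subseteq F(A)$; combined with the maximality of $A$ (\cref{maximal}) this gives $F(\B(\hat{\rho}))=F(A)$. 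Hence $\B(\hat{\rho})\subseteq\RR[M]\cap F(A)$, which equals $A$ by \cref{FF}, and the two inclusions give $\B(\hat{\rho})=A$. The two steps I expect to demand the most work are the upgrade to a manifold submetry via \cite{LW24} and the field-of-fractions identification, whereas the metric descent and basic mean curvature follow cleanly from the polarization identity above.
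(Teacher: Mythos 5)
Your overall architecture matches the paper's: finite separating set from \cref{lemmasep}, descent of the metric via the polarization identity, basic mean curvature on the regular part, completion to a submetry, smoothness of singular fibers via \cite{LW24}, and the endgame $\B(\hat{\rho})\subset \RR[M]\cap F(A)=A$ using \cref{maximal} and \cref{FF}. But there is one genuine gap, and it sits exactly at the point the paper singles out as a main technical novelty. You assert that $X_0:=\rho(M_0)$, the image of the maximal-rank locus, ``is a smooth manifold.'' This does not follow: the image of a constant-rank map is only \emph{locally} an embedded submanifold near each point of the source; globally it can self-intersect or accumulate on itself, and then no pushforward metric can be defined. The paper explicitly records that it does \emph{not} know whether $\rho(M_{\reg})$ is a smooth manifold (see the footnote on \cite[Proposition 26]{MR20}), and repairs this with \cref{L:stratification}: one shows $\rho(M_{\reg})$ is semi-algebraic via a $G$-equivariant embedding and Tarski's theorem, takes a semi-algebraic stratification, and removes the lower-dimensional strata by cutting out the zero set of a polynomial $Q$, shrinking $M_0$ to $\rho^{-1}\bigl(\rho(M_{\reg})\setminus Z(Q)\bigr)$. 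Without this step your Riemannian submersion $\rho_0\colon M_0\to X_0$ is not known to exist. (The auxiliary function $Q\circ\rho$ is then also used in the final field-of-fractions argument, so the omission propagates.)

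A second, smaller gap: you dismiss the passage from ``Riemannian submersion on $M_0$'' to ``submetry on $M$'' as following ``by continuity.'' A Riemannian submersion only makes its fibers \emph{locally} equidistant, and a minimizing geodesic in $M$ between two fibers may leave $M_0$ entirely, so continuity alone does not give global equidistance in $M$. The paper's argument here is a real-analyticity trick: the function $F(t)$ measuring failure of horizontality of a minimizing geodesic is real-analytic (eigenfunctions and geodesics on a homogeneous space are real-analytic), vanishes near the endpoint, hence vanishes identically, so horizontal lifts can be propagated and the fibers are genuinely equidistant in $M$. Your sketch of basic mean curvature via $\langle\nabla\rho_i,H\rangle=\Delta\rho_i+\operatorname{tr}_{\mathcal H}\operatorname{Hess}\rho_i$ and closure of $A$ under the derivations $\nabla\rho_l$ is consistent with the argument the paper delegates to \cite[Lemma 31]{MR20}, and your concluding identification $\B(\hat{\rho})=A$ is in the right spirit, though the paper replaces your vague ``Rosenlicht-type argument'' with the concrete device of multiplying $f$ by an element of $A$ (built from the Gram minors and $Q\circ\rho$) vanishing off $M_0$, and then invoking maximality.
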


Before presenting the proof of \cref{T:fromlaplacian}, we use it to prove the following:
\begin{corollary}\label{1-1correspondence}
Let $M$ be a compact homogeneous space. Define an equivalence relation $\sim$ on the set of all manifold submetries from $M$ with basic mean curvature by $\sigma\sim\sigma'$ if and only if the fibers of  $\sigma$ and $\sigma'$ define the same decomposition of $M$. Then the map $\sigma\mapsto\B(\sigma)$ defines a bijection from the set of all manifold submetries from $M$ with basic mean curvature (modulo $\sim$), to the set of all sub-algebras of $\RR[M]$ that are preserved under the Laplace--Beltrami operator.
\end{corollary}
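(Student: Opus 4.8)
The plan is to prove that $\sigma \mapsto \B(\sigma)$ is a well-defined bijection by exhibiting a two-sided inverse, namely the map $A \mapsto \sigma_A$ supplied by \cref{T:fromlaplacian}. First I would check that the map is well-defined and lands in the correct target set: for any manifold submetry $\sigma$ with basic mean curvature, \cref{thmalgebraic} guarantees that $\B(\sigma)$ is a sub-algebra of $\RR[M]$ preserved by $\Delta$, so the map does send equivalence classes into the set of Laplacian sub-algebras. It descends to the quotient by $\sim$ because $\B(\sigma)$ depends only on the fibers (i.e.\ on the decomposition of $M$), by its very definition as the algebra of algebraic functions constant on the fibers; hence $\sigma \sim \sigma'$ immediately gives $\B(\sigma) = \B(\sigma')$.

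Next I would establish surjectivity, which is exactly the content of \cref{T:fromlaplacian}: given a Laplacian sub-algebra $A$, that theorem produces a manifold submetry $\sigma$ with basic mean curvature and $\B(\sigma) = A$, so every element of the target is hit.

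The main work is injectivity: I must show that if two manifold submetries $\sigma, \sigma'$ with basic mean curvature satisfy $\B(\sigma) = \B(\sigma')$, then $\sigma \sim \sigma'$, i.e.\ they induce the same partition of $M$. The key point is that, for a manifold submetry with basic mean curvature, the partition is recovered from $\B(\sigma)$ via $\L(\B(\sigma))$; that is, the fibers of $\sigma$ coincide with the equivalence classes of $\sim_{\B(\sigma)}$. One inclusion is trivial, since every $f \in \B(\sigma)$ is constant on fibers, so each fiber lies in a single $\sim_{\B(\sigma)}$-class. For the reverse inclusion I would invoke the separating property proved inside \cref{thmalgebraic}: $\B(\sigma)$ separates distinct $\sigma$-fibers, so two points in distinct fibers are distinguished by some $f \in \B(\sigma)$, hence are not $\sim_{\B(\sigma)}$-equivalent. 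Therefore the $\sim_{\B(\sigma)}$-classes are exactly the fibers, and the partition is determined by $\B(\sigma)$. Consequently $\B(\sigma) = \B(\sigma')$ forces $\L(\B(\sigma)) = \L(\B(\sigma'))$, hence the same partition, hence $\sigma \sim \sigma'$.

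I expect the only genuine obstacle to be the injectivity step, and within it the cleanest formulation is the observation that the fiber decomposition equals $\L(\B(\sigma))$; this is where the hypothesis of basic mean curvature is essential, because it is precisely what makes \cref{thmalgebraic} (and the separating property) applicable. Surjectivity and well-definedness are then immediate bookkeeping once \cref{T:fromlaplacian} and \cref{thmalgebraic} are in hand.
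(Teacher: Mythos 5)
Your proposal is correct and follows essentially the same route as the paper: well-definedness and surjectivity from \cref{thmalgebraic} and \cref{T:fromlaplacian} respectively, and injectivity from the separating property of $\B(\sigma)$ established in \cref{thmalgebraic}, which shows the fiber partition is recovered as $\L(\B(\sigma))$. No gaps.
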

\begin{proof}
Let $\sigma\colon M\to X$ be a manifold submetry with basic mean curvature. By \cref{thmalgebraic}, $\B(\sigma)$ is a sub-algebra of $\RR[M]$ preserved by the Laplacian. By definition,  $\B(\sigma)$ depends only on the decomposition of $M$ into the fibers of $\sigma$. Thus the map above is well-defined.

By \cref{T:fromlaplacian}, the image of $\B$ is the set of all sub-algebras of $\RR[M]$ that are preserved under the Laplace--Beltrami operator.

Let $\sigma\colon M\to X$ and $\sigma'\colon M\to X'$ be two manifold submetries with basic mean curvature, and assume $\B(\sigma)=\B(\sigma')$. By \cref{thmalgebraic}, $\sigma$ and $\sigma'$ are algebraic, and thus $\B(\sigma)=\B(\sigma')$ separates $\sigma$-fibers and separates $\sigma'$-fibers. Therefore, given $p,q\in M$, they belong to different $\sigma$-fibers if and only if they belong to different $\sigma'$-fibers. We conclude that $\sigma\sim\sigma'$.
\end{proof}

The remainder of this section is devoted to a proof of \cref{T:fromlaplacian}, and includes several lemmas. Some of the statements are analogous to results in \cite{MR20} (which treats the spherical case), and we skip details for those where the \emph{proofs} are also analogous. The main novelties are the inclusion of \cref{L:stratification}, since we do not know if  $\rho(M_{\reg})$ is a smooth manifold when $M$ is not a sphere (cf. \cite[Proposition 26]{MR20}\footnote{In the preparation of the present article we have noticed that one step in the proof of  \cite[Proposition 26]{MR20} is not sufficiently justified. The missing argument can be shown to achieve the result as stated in \cite[Proposition 26]{MR20}, but this argument only applies to the case where $M$ is a round sphere, because it uses the finite generation of the Laplacian algebra, cf. \cref{Q:fg}.}), and the use of \cite{LW24} in the paragraph following \cref{L:completion}, which replaces arguments similar to \cite[Propositions 29 and 30]{MR20}.

\subsection{Proof of  \cref{T:fromlaplacian}}
Fix $(M^n,g)$ a compact homogeneous space, $A\subset\RR[M]$ a sub-algebra preserved by $\Delta$, and $\rho_1, \ldots, \rho_k$ a finite separating set (which exists by \cref{lemmasep}).

Define $\rho=(\rho_1, \ldots, \rho_k)\colon M\to \RR^k$. Let  $m$ be the maximal dimension of the span of $\nabla\rho_1, \ldots, \nabla \rho_k$, and define 
\[M_{\reg}=\{p\in M \ :\  \dim(\operatorname{span}(\nabla\rho_1(p), \ldots, \nabla \rho_k(p)))=m \}.\]

Note that, for $X\in T_pM$, we have $d\rho_p(X)=(\langle \nabla\rho_1(p),X\rangle, \ldots , \langle \nabla\rho_k(p),X\rangle)$, so that $\ker(d\rho_p)=\{ \nabla\rho_1(p),\ldots,  \nabla\rho_k(p)\}^\perp$. This implies that $\rho|_{M_{\reg}}$ is a constant rank map, and the normal space to the fiber through $p\in M_{\reg}$ is  
\[\nu_p(\rho^{-1}(x))=\operatorname{span}(\nabla\rho_1(p), \ldots, \nabla \rho_k(p)).\]

\begin{lemma}\label{L:stratification}
There exists a polynomial function $Q\colon \RR^k\to\RR$ such that $X_0=\rho(M_{\reg})\setminus Z(Q)$ is a smooth $m$-dimensional embedded submanifold of $\RR^k$, and $M_0=\rho^{-1}(X_0)$ is an open, full-measure subset of $M_{\reg}$, where $Z(Q)$ denotes the zero set of $Q$. 
\end{lemma}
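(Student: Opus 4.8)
The plan is to exploit that $\rho$ is, after a fixed equivariant embedding, the restriction of a polynomial map, so that its image is a compact semi-algebraic set whose smooth top-dimensional locus is obtained, off a lower-dimensional exceptional set, by deleting the zero set of a single polynomial.

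First I would, using \cref{algebraicfunctions}, fix a $G$-equivariant embedding $\varphi\colon M\to\RR^N$ with $G$ acting linearly, and write $\rho_i=P_i\circ\varphi$ for polynomials $P_i\colon\RR^N\to\RR$. Since $\varphi(M)$ is a $G$-orbit, it is a compact real algebraic subvariety of $\RR^N$; hence $\rho=P\circ\varphi$ (with $P=(P_1,\dots,P_k)$) is a semi-algebraic map and $S:=\rho(M)=P(\varphi(M))$ is a compact semi-algebraic subset of $\RR^k$, exactly as in the proof of \cref{P:algebraic}. On $M_{\reg}$ the map $\rho$ has constant rank $m$, so near each point it is a submersion onto an embedded $m$-dimensional submanifold; in particular $\dim S=m$.

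Next I would record two facts from the local structure theory of semi-algebraic sets (their finite decomposition into Nash cells). First, the set $\operatorname{Reg}_m(S)$ of points near which $S$ is a smooth embedded $m$-dimensional submanifold of $\RR^k$ is semi-algebraic and open in $S$, and its complement $B_1:=S\setminus\operatorname{Reg}_m(S)$ is semi-algebraic of dimension $<m$. Second, the rank of $d\rho$ is at most $m-1$ on $M\setminus M_{\reg}$; stratifying this semi-algebraic set into finitely many smooth pieces on each of which $\rho$ has constant rank $\le m-1$, each image is a semi-algebraic set of dimension $\le m-1$, so $B_2:=\rho(M\setminus M_{\reg})$ is semi-algebraic of dimension $<m$. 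Setting $B=B_1\cup B_2$, a semi-algebraic subset of $S$ of dimension $<m$, let $W$ be its Zariski closure, an algebraic variety with $\dim W=\dim B<m$; choosing generators $Q_1,\dots,Q_r$ of its ideal and putting $Q=\sum_i Q_i^2$ gives $Z(Q)=W\supseteq B$. Because $\dim W<m=\dim S$, the variety $W$ cannot contain $S$, so $Q\circ\rho$ is a not-identically-zero real-analytic function on the connected manifold $M$.

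It then remains to verify the three assertions for $X_0=\rho(M_{\reg})\setminus Z(Q)$ and $M_0=\rho^{-1}(X_0)$. Since $B_2\subseteq Z(Q)$, one has $S\setminus Z(Q)=\rho(M_{\reg})\setminus Z(Q)=X_0$; as $X_0$ is the complement in $S$ of the relatively closed set $Z(Q)\cap S$, it is open in $S$, and since $B_1\subseteq Z(Q)$ it lies inside the $m$-manifold $\operatorname{Reg}_m(S)$, so $X_0$ is an open subset of $\operatorname{Reg}_m(S)$ and hence a smooth embedded $m$-dimensional submanifold of $\RR^k$. Moreover $M_0=M\setminus\rho^{-1}(Z(Q))$ is open in $M$; any $p\in M_0$ satisfies $\rho(p)\notin Z(Q)\supseteq B_2$, forcing $p\in M_{\reg}$, so $M_0\subseteq M_{\reg}$ is open. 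Its complement $M_{\reg}\setminus M_0$ is contained in the zero set of $Q\circ\rho$, which has measure zero as $Q\circ\rho$ is a nonzero real-analytic function on connected $M$; thus $M_0$ is full-measure. The main obstacle is the semi-algebraic bookkeeping of the previous paragraph, in particular the bound $\dim\rho(M\setminus M_{\reg})<m$ and the reduction of the exceptional set to the zero locus of a single polynomial that does not vanish on all of $S$; this is precisely the step where one cannot appeal to smoothness of $\rho(M_{\reg})$ or to finite generation of the associated algebra, which is why a purely semi-algebraic argument is needed here.
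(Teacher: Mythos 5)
Your proof is correct and follows essentially the same route as the paper's: exhibit $\rho(M_{\reg})$ as an $m$-dimensional semi-algebraic set via the equivariant embedding and Tarski's theorem, discard a lower-dimensional semi-algebraic bad locus coming from the stratification, and enclose that locus in an algebraic set $Z(Q)$ of dimension less than $m$ cut out by a single polynomial. The one substantive addition is that you also place $B_2=\rho(M\setminus M_{\reg})$ inside $Z(Q)$ (via the constant-rank/stratification bound $\dim B_2<m$), which is precisely what guarantees the asserted inclusion $M_0\subset M_{\reg}$, and you spell out the full-measure claim via real-analyticity of $Q\circ\rho$ --- points the paper's own proof leaves implicit.
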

\begin{proof}
We first show that the set $\rho(M_{\reg})\subset \RR^k$ is semi-algebraic. The proof is analogous to the proof of \cref{P:algebraic}(b). We use a $G$-equivariant embedding $\varphi\colon M\to \RR^N$, where $G$ is a compact Lie group acting transitively on $M$ and linearly on $\RR^N$, see \cref{algebraicfunctions}. In particular, $\varphi(M)$ is a $G$-orbit, hence an algebraic variety in $\RR^N$. Choose polynomial functions $P_i\colon \RR^N\to\RR$ such that $\rho_i=P_i\circ \varphi$. Then $\varphi(M_{\reg})$ is the subset of $\varphi(M)$ where the $1$-forms $dP_i$, restricted to the tangent space of $\varphi(M)$, span an $m$-dimensional space. Since the $P_i$ are polynomials, and the tangent space of $\varphi(M)$ is spanned by a finite set of linear vector fields, namely the action fields corresponding to a basis of the Lie algebra of $G$, it follows that $\varphi(M_{\reg})$ is semi-algebraic. Therefore, putting $P=(P_1, \ldots, P_k)$, we conclude that  $\rho(M_{\reg})=P(\varphi(M_{\reg}))\subset \RR^k$ is semi-algebraic by Tarski's Theorem.

Recall that every semi-algebraic subset $E\subset\RR^k$ admits a semi-algebraic stratification, see \cite[Definition 2.4.1 on page 68 and Proposition 2.5.1 on page 75]{BenedettiRisler}. This means that $E$ is the finite disjoint union of ``strata'',  where each ``stratum'' is a semi-algebraic locally closed analytic submanifold of $\RR^k$ (and they satisfy the ``frontier condition''). The dimension of $E$ is defined as the maximum dimension (as a smooth manifold) of its strata, see \cite[Definition 2.5.3 on page 75]{BenedettiRisler}.

Since  $\rho|_{M_{\reg}}$ is a smooth map with constant rank equal to $m$, the semi-algebraic set $\rho(M_{\reg})\subset \RR^k$ has dimension $m$. 
Take a stratification of $\rho(M_{\reg})$ and let $T\subset \rho(M_{\reg})$ be the union of the strata with dimension strictly smaller than $m$. In particular, $\rho(M_{\reg})\setminus T$ is a smooth $m$-dimensional embedded submanifold of $\RR^k$.

Since $T$ is a semi-algebraic set with dimension smaller than $m$, \cite[Lemma 3.4.4 on page 136]{BenedettiRisler} implies that $T\subset Z$, where $Z$ is an \emph{algebraic} set of dimension (as a semi-algebraic set, i.e., in the sense of \cite[Definition 2.5.3]{BenedettiRisler}) smaller than $m$. Thus $Z$ is a finite union of locally closed smooth submanifolds of dimension smaller than $m$. Letting $Q$ be a polynomial defining $Z$ finishes the proof.
\end{proof}

 Since $\rho|_{M_0}\colon M_0\to X_0$ is a constant rank map, it is a surjective smooth submersion.

\begin{lemma} (cf. \cite[Proposition 26]{MR20})
There exists a smooth metric $b$ on $X_0$ such that $\rho|_{M_0}\colon M_0\to X_0$ is a Riemannian submersion with basic mean curvature.
\end{lemma}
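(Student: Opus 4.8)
The plan is to build the metric $b$ by pushing forward $g$ along the horizontal spaces of $\rho|_{M_0}$, and the crucial observation that makes this push-forward well defined (independent of the point chosen in a fiber) is that its ``co-metric'' coefficients are themselves members of $A$, hence basic. So the first thing I would record is that the functions
\[
s_{ij} := \langle \nabla\rho_i, \nabla\rho_j\rangle
\]
lie in $A$. This follows from the product rule for $\Delta = -\operatorname{div}\nabla$, namely $s_{ij} = \tfrac12\bigl(\rho_i\,\Delta\rho_j + \rho_j\,\Delta\rho_i - \Delta(\rho_i\rho_j)\bigr)$: since $A$ is a sub-algebra preserved by $\Delta$ containing $\rho_1,\dots,\rho_k$, the right-hand side lies in $A$. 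Because $\L(A)=\L(\{\rho_1,\dots,\rho_k\})$, every element of $A$ is constant on the fibers of $\rho$, so the $s_{ij}$ are basic and descend to smooth functions $\bar s_{ij}$ on $X_0$ (a basic smooth function descends smoothly across the submersion $\rho|_{M_0}$ by composing with local sections). I also record, using the formula $d\rho_p(X)=(\langle\nabla\rho_1(p),X\rangle,\dots,\langle\nabla\rho_k(p),X\rangle)$, that for $p\in\rho^{-1}(x)$ the vector $u_i(x):=d\rho_p(\nabla\rho_i(p))=(\bar s_{1i}(x),\dots,\bar s_{ki}(x))$ depends only on $x$.

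Next I would define $b$. For $x\in X_0$ and $p\in\rho^{-1}(x)$, the differential $d\rho_p$ restricts to a linear isomorphism from the horizontal space $\mathcal H_p=\operatorname{span}(\nabla\rho_1(p),\dots,\nabla\rho_k(p))=\nu_p(\rho^{-1}(x))$ onto $T_xX_0$; let $b_x$ be the push-forward of the inner product $g_p|_{\mathcal H_p}$. This is a genuine inner product for each $p$, and to see it is independent of $p\in\rho^{-1}(x)$ I observe that the $u_i(x)$ span $T_xX_0$ and that both the $u_i(x)$ and the values $b_x(u_i,u_j)=s_{ij}(p)=\bar s_{ij}(x)$ depend only on $x$; since a symmetric bilinear form is determined by its values on a spanning set, $b_x$ is well defined. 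Smoothness of $b$ follows because $b_x$ is produced algebraically from the smooth, constant-rank (rank $m$, as a Gram matrix of vectors spanning an $m$-plane) symmetric matrix field $(\bar s_{ij}(x))$ together with the smoothly varying subspace $T_xX_0$. By construction $d\rho_p|_{\mathcal H_p}$ is a linear isometry onto $(T_xX_0,b_x)$, so $\rho|_{M_0}\colon M_0\to(X_0,b)$ is a Riemannian submersion.

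The final and, I expect, most delicate point is the basic mean curvature. Here I would invoke the standard formula relating Laplacians under a Riemannian submersion: for a basic function $f=\bar f\circ\rho$ on $M_0$,
\[
\Delta_M f = (\Delta_{X_0}\bar f)\circ\rho + \langle H,\nabla f\rangle,
\]
where $H$ is the mean curvature vector field of the fibers; this follows from a direct computation of $\operatorname{div}(\nabla f)$ in a frame adapted to the submersion, the vertical part of the trace contributing exactly $\langle H,\nabla f\rangle$. Applying this to $f=\rho_i$ (basic, with $\bar f=x_i|_{X_0}$), the term $\Delta_M\rho_i$ lies in $A$ since $A$ is $\Delta$-invariant, and $(\Delta_{X_0}\bar\rho_i)\circ\rho$ is basic; hence their difference $\langle H,\nabla\rho_i\rangle=\langle d\rho(H),u_i\rangle_b$ is basic, i.e. depends only on $x=\rho(p)$. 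Because the $u_i(x)$ span $T_xX_0$, the inner products $\langle d\rho_p(H_p),u_i(x)\rangle_b$ for $i=1,\dots,k$ determine $d\rho_p(H_p)\in T_xX_0$; as these are all independent of the choice of $p\in\rho^{-1}(x)$, so is $d\rho_p(H_p)$, which is precisely the assertion that $\rho|_{M_0}$ has basic mean curvature. The only genuinely non-formal ingredient is the displayed Laplacian identity, so I would either cite it from the Riemannian-submersion literature or include the short frame computation; everything else reduces to the algebraic fact that $A$ is an algebra closed under $\Delta$.
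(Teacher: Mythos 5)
Your proposal is correct and follows essentially the same route as the paper: the key point in both is that $\langle\nabla\rho_i,\nabla\rho_j\rangle\in A$ (hence basic) via the Laplacian product rule, which makes the pushed-forward metric $b$ well defined and $\rho|_{M_0}$ a Riemannian submersion. For the basic mean curvature the paper simply cites \cite[Lemma 31]{MR20}, and the submersion-Laplacian identity $\Delta_M f=(\Delta_{X_0}\bar f)\circ\rho+\langle H,\nabla f\rangle$ applied to the $\rho_i$ that you write out is exactly that argument.
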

\begin{proof}
For all $i,j$, we have 
\[\langle \nabla \rho_i, \nabla\rho_j\rangle= \frac{1}{2}\left( \Delta(\rho_i \rho_j) - (\Delta\rho_i)\rho_j - \rho_i(\Delta\rho_j)\right)\in A,\]
because $A$ is a sub-algebra  preserved by the Laplacian. Thus, for all $p,q\in M_{\reg}$ with $\rho(p)=\rho(q)$, we have $\langle \nabla \rho_i (p), \nabla\rho_j(p)\rangle=\langle \nabla \rho_i(q), \nabla\rho_j(q)\rangle$ by the separation property.

Thus, for every $x\in X_0$, there is an inner product $b_x$ on $T_x X_0$ with respect to which $d\rho_p$ restricted to the normal space  $\nu_p(\rho^{-1}(x))$ is an isometry onto $T_x X_0$, for every choice of $p\in \rho^{-1}(x)$. This defines a metric tensor $b$ on $X_0$ such that $\rho|_{M_0}\colon M_0\to X_0$ is a Riemannian submersion. Since $\rho|_{M_0}\colon M_0\to X_0$ is a smooth submersion, and the metric $g$ is smooth, it follows that $b$ is also smooth.

The proof that the mean curvature vector field is basic is the same as in \cite[Lemma 31]{MR20}. 
\end{proof}

\begin{lemma} (cf. \cite[Proposition 27]{MR20})
Every pair $L_1, L_2$ of $\rho$-fibers of the Riemannian submersion $\rho|_{M_0}\colon M_0\to X_0$ are equidistant in $M$.
\end{lemma}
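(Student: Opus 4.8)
The plan is to reduce the assertion to the Riemannian submersion structure on $M_0$ just established, for which local equidistance is automatic, and then to globalize by a minimizing-geodesic argument. The only genuine difficulty is that geodesics and their horizontal lifts may cross the singular stratum $M\setminus M_{\reg}$, where the submersion structure degenerates.

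\emph{Local equidistance.} Since $\rho|_{M_0}\colon M_0\to(X_0,b)$ is a Riemannian submersion, horizontal lifts of $b$-geodesics are $g$-geodesics (a standard property of Riemannian submersions), and $d\rho$ is a $b$-isometry on horizontal vectors while annihilating vertical ones. Hence, for $x\in X_0$ and $x'$ sufficiently close, lifting a minimizing $b$-geodesic from $x$ to $x'$ horizontally, starting at an arbitrary $p\in\rho^{-1}(x)$, yields a $g$-geodesic from $p$ to $\rho^{-1}(x')$ of length $d_b(x,x')$; conversely any curve from $p$ to $\rho^{-1}(x')$ projects to a curve from $x$ to $x'$ of no greater $b$-length. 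Thus $d_g\bigl(p,\rho^{-1}(x')\bigr)=d_b(x,x')$ independently of $p\in\rho^{-1}(x)$, so nearby regular fibers are equidistant in $M$.

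\emph{Globalization.} Fix regular fibers $L_1=\rho^{-1}(x_1)$ and $L_2=\rho^{-1}(x_2)$, let $p_0\in L_1$ realize $\min_{p\in L_1}d(p,L_2)$, and let $\gamma$ be a minimizing $g$-geodesic from $p_0$ to $L_2$, so $\operatorname{length}_g(\gamma)=\min_{L_1}d(\cdot,L_2)$. Projecting gives a curve $\bar\gamma=\rho\circ\gamma$ from $x_1$ to $x_2$ with $\operatorname{length}_b(\bar\gamma)\le\operatorname{length}_g(\gamma)$ (projection shortens). For an arbitrary $p\in L_1$, a horizontal lift of $\bar\gamma$ starting at $p$ is a curve in $M$ of $g$-length equal to $\operatorname{length}_b(\bar\gamma)$ and ending on $L_2$, whence
\[
d(p,L_2)\le \operatorname{length}_b(\bar\gamma)\le\operatorname{length}_g(\gamma)=\min_{L_1}d(\cdot,L_2).
\]
By the choice of $p_0$ each inequality is an equality, so $d(\cdot,L_2)$ is constant on $L_1$; the symmetric argument shows $d(\cdot,L_1)$ is constant on $L_2$, which is the claim.

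\emph{Main obstacle.} The projection and the horizontal lifts used above require the submersion metric $b$, which lives on $X_0$, whereas $\gamma$ and the lifted curves may leave $M_0$. I expect this passage through the singular locus to be the only step requiring real care, and it should be handled exactly as in \cite[Proposition 27]{MR20}. The locus $\rho^{-1}(Z(Q))\subset M_{\reg}$ is harmless: these are \emph{regular} points of $\rho$, where the horizontal distribution and the induced horizontal inner product extend smoothly, so only the ambient image (not the geometry on $M$) is singular there, and lifts continue across it. The genuine obstruction is $M\setminus M_{\reg}$, which is closed of codimension at least two (as recalled in the proof of \cref{average}); consequently its $\rho$-image meets $\bar\gamma$ in a negligible set that neither disconnects the curves nor obstructs continuation. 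One therefore carries out the projection and the lifting within the dense open full-measure set $M_0$, extends continuously across the finitely many singular crossings, and passes the length comparisons to the limit.
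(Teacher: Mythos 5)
Your overall strategy (compare distances via projection and horizontal lifting) is the right shape, but the step you flag as the ``main obstacle'' is in fact the entire content of the lemma, and the way you propose to handle it does not work. First, a minimizing geodesic $\gamma$ from $L_1$ to $L_2$ has no a priori reason to meet $M\setminus M_0$ in only ``finitely many singular crossings'': the fact that $M\setminus M_{\reg}$ has codimension at least two constrains its measure, not how a single curve can intersect it, and $\gamma$ could a priori spend an entire subinterval there, in which case $\bar\gamma=\rho\circ\gamma$ is not even a curve in $(X_0,b)$ and the inequality $\operatorname{length}_b(\bar\gamma)\le\operatorname{length}_g(\gamma)$ is meaningless. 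Second, horizontal lifts with respect to a Riemannian submersion defined only on the open set $M_0$ need not extend across $M\setminus M_0$; ``extends continuously across the singular crossings and passes the length comparisons to the limit'' asserts exactly what needs to be proved, namely that the lift starting at an arbitrary $p\in L_1$ survives for the full length $\ell$ and terminates on $L_2$.

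The paper closes this gap with real analyticity, which is the missing idea in your proposal. Since $M$ is homogeneous it is real analytic, its geodesics are real analytic, and the $\rho_i$ (finite sums of eigenfunctions) are real analytic. Horizontality of $\gamma'(t)$ is encoded as the vanishing of an explicit analytic function $F(t)$ built from minors of the matrix with columns $\gamma'(t),\nabla\rho_1(\gamma(t)),\dots,\nabla\rho_k(\gamma(t))$ in a parallel frame; the first variation formula forces $\gamma'$ to be horizontal at $t=\ell$ and hence near $\ell$, so $F\equiv 0$ and $\gamma$ is horizontal near $t=0$ as well, with no need to track where $\gamma$ meets the singular set. The ``lift'' from $q\in L_1$ is then taken to be the globally defined geodesic $\hat\gamma$ whose restriction to a small interval around $0$ is the honest horizontal lift of $\rho\circ\gamma$; the identity $\rho\circ\hat\gamma=\rho\circ\gamma$ holds near $t=0$ and therefore for all $t$ by analyticity, giving $\hat\gamma(\ell)\in L_2$ and $d(q,L_2)\le d(p,L_2)$. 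Your ``local equidistance'' paragraph is correct but is not needed once this argument is in place. To repair your write-up you would need to replace the codimension-two heuristic with this (or an equivalent) analytic continuation argument.
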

\begin{proof}
Let $p,q\in L_1$. Let $\gamma$ be a (unit speed) minimizing geodesic in $M$ between $p=\gamma(0)$ and $L_2$, of length $\ell$. Then $\gamma'(t)$ is horizontal at $t=\ell$, hence for $t$ near $\ell$. 

Write the vectors $\gamma'(t)$ and $\nabla \rho_i (\gamma(t))$, for $i=1, \ldots k$ in a parallel frame along $\gamma$. Arrange the vectors thus obtained as the columns of an $n\times(k+1)$ matrix $B(t)$, and let $F(t)$ be the sum of the squares  of all the $(m+1)\times (m+1)$ minors of $B(t)$. Thus, for all $t$ such that $\gamma(t)\in M_0$, we have that $F(t)=0$ if and only if $\gamma'(t)$ is horizontal.

Since $(M,g)$ is a compact homogeneous space, it is a real analytic manifold, and thus its geodesics are real analytic, and so are the eigenfunctions of the Laplacian. Therefore $F(t)$ is analytic. Since $F(t)$ vanishes for $t$ near $\ell$, it must vanish everywhere, in particular near $t=0$. That is, the restriction of $\gamma(t)$ to a small interval $(-\varepsilon, \varepsilon)$ is a horizontal geodesic for the Riemannian submersion $\rho|_{M_0}\colon M_0\to X_0$.

Let $\hat{\gamma}\colon \RR\to M $ be the geodesic such that $\hat{\gamma}|_{(-\varepsilon, \varepsilon)}$ is the horizontal lift of $\rho\circ\gamma|_{(-\varepsilon, \varepsilon)}$ with $\hat{\gamma}(0)=q$. Then $\rho(\gamma(t))=\rho(\hat{\gamma}(t))$ for all $t\in (-\varepsilon, \varepsilon)$, and therefore for all $t\in\RR$, because these are analytic functions. In particular, $\hat{\gamma}(\ell)\in L_2$, which implies that $d_M(q,L_2)\leq \ell=d_M(p, L_2)$.

Reversing the roles of $p,q$, we obtain $d_M(q,L_2)=d_M(p, L_2)$. Reversing the roles of $L_1, L_2$, we conclude that $L_1, L_2$ are equidistant.
\end{proof}

\begin{lemma}\label{L:completion}
There exists a submetry $\hat{\rho}\colon M\to \hat{X}$ such that  $M_0$ is a saturated set, and, on $M_0$, the decomposition into the fibers of $\hat{\rho}$ coincides with the decomposition into the fibers of the Riemannian submersion $\rho|_{M_0}$.
\end{lemma}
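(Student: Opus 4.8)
**

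The plan is to construct the submetry $\hat\rho$ as the metric completion of the partition of $M_0$ into the fibers of the Riemannian submersion $\rho|_{M_0}\colon M_0\to X_0$, and then verify it is genuinely a submetry on all of $M$. We already know from the previous lemma that the fibers $L$ of $\rho|_{M_0}$ are pairwise equidistant \emph{in the ambient metric $d_M$}, not merely within $M_0$. This is the crucial input, because it means the function $d_M(\,\cdot\,,L)$ is constant on each fiber $L'$, so we can define a pseudometric $\bar d$ on the set of fibers by declaring the distance between two fibers $L,L'$ of $M_0$ to be their ambient distance $d_M(L,L')$. The triangle inequality and symmetry for $\bar d$ are inherited from $d_M$ together with equidistance, exactly as in the sphere case.

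I would then let $(\hat X,\bar d)$ be the metric space obtained by completing the quotient $M_0/(\rho|_{M_0})$ with respect to $\bar d$ (equivalently, take the closure of the fibers in $M$). The natural projection $M_0\to X_0$ extends to a map $\hat\rho\colon M\to\hat X$ by sending $p\in M$ to the limit of the fibers through points of $M_0$ approaching $p$; since $M_0$ is open, dense, and full-measure in $M$, and since the fiber-distance function is $1$-Lipschitz, this extension is well-defined and continuous. By construction $M_0$ is saturated and the $\hat\rho$-fibers restrict on $M_0$ to the $\rho|_{M_0}$-fibers. The key point to check is that $\hat\rho$ is a \emph{submetry}, i.e.\ takes closed balls to closed balls of the same radius. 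The inequality $\hat\rho(\bar B(p,r))\subseteq \bar B(\hat\rho(p),r)$ is immediate from $\bar d(\hat\rho(p),\hat\rho(q))\le d_M(p,q)$ (itself a consequence of equidistance of fibers). For the reverse inclusion one uses that $\bar d$ is realized by ambient minimizing geodesics between regular fibers: given a target at distance $r$, one takes a horizontal minimizing geodesic, whose existence and horizontality on all of its domain follows from the analytic-continuation argument in the preceding lemma. A density/limit argument then extends the surjectivity onto balls from pairs of regular fibers to all fibers in the completion.

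The main obstacle I anticipate is verifying the submetry property \emph{at the singular fibers}, i.e.\ across the nowhere-dense set $M\setminus M_0$, where the local product structure of a Riemannian submersion degenerates and where a priori the closures of distinct regular fibers could merge or a closed ball could fail to surject onto a full metric ball in $\hat X$. Controlling this requires knowing that horizontal geodesics can be continued through singular fibers and that the equidistance relation passes to the closure without collapsing — precisely the place where the analyticity of $M$ and of the eigenfunctions $\rho_i$ (already exploited in the equidistance lemma) does the heavy lifting. I would isolate this as the technical heart of the proof and handle it by a limiting argument: approximate singular fibers by regular ones in $M_0$, transport the submetry estimates along these approximations, and use completeness of $\hat X$ together with continuity of $\hat\rho$ to pass to the limit.

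I should note that this lemma deliberately stops short of asserting that $\hat\rho$ is a \emph{manifold} submetry; that stronger conclusion (that the singular fibers are again smooth submanifolds) is what the paper defers to the subsequent invocation of \cite{LW24}. So for this statement alone I would only establish the submetry property and the agreement of fibers on $M_0$, leaving smoothness of the singular fibers to the next step.
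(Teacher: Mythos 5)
Your proposal is correct and follows essentially the same route as the paper, whose proof simply defers to \cite[Prop.~28]{MR20}: one takes Hausdorff limits of the regular fibers and uses their ambient equidistance (from the preceding lemma) to see that the limits form a partition of $M$ into pairwise equidistant closed subsets that agrees with the $\rho|_{M_0}$-fibers on $M_0$. The one simplification worth noting is that your direct verification of the ball-to-ball property via horizontal geodesics through singular fibers is unnecessary: as recalled in \cref{prelimsub}, any partition of $M$ into equidistant closed subsets automatically induces a metric on the quotient making the projection a submetry, so it suffices to check that equidistance and disjointness pass to the Hausdorff limits.
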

\begin{proof}
Using the previous lemmas, this is analogous to the proof of \cite[Prop. 28]{MR20}.
\end{proof}

Since almost all regular fibers of the submetry $\hat{\rho}\colon M\to \hat{X}$ have basic mean curvature, we can use \cite[Proposition 15.2]{LW24} to conclude that every fiber of $\hat{\rho}$ is a smooth submanifold. According to the definitions in \cite{LW24}, this still leaves the possibility that some fibers have connected components of different dimensions. But  basic mean curvature implies, via an argument analogous to \cite[Proposition 32]{MR20}, that every fiber of $\hat{\rho}$ does indeed have components of the same dimension, that is, $\hat{\rho}$ is a manifold submetry in the sense of \cref{D:mansub}.

Note that the regular part of $\hat{\rho}$ in the sense of \cref{prelimsub} contains $M_0$, but may be larger. Nevertheless, since $M_0$ is dense in $M$, it follows that $\hat{\rho}$ has basic mean curvature in the sense of \cref{D:bmc}. Thus, by \cref{thmalgebraic}, the sub-algebra $\hat{A}=\B(\hat{\rho})\subset\RR[M]$ is preserved by the Laplacian, and separates $\hat{\rho}$-fibers, so that $\L(\hat{A})$ coincides with the decomposition of $M$ into the fibers of $\hat{\rho}$.

It remains to show that $A=\hat{A}$.

On the one hand, let $f\in A$. The construction of $\hat{\rho}$ implies that, for any $p,q\in M$ with $\hat{\rho}(p)=\hat{\rho}(q)$, there exist sequences $p_i, q_i\in M_0$ such that $p=\lim p_i$ and $q=\lim q_i$, and such that  $\rho(p_i)=\rho(q_i)$ for every $i$. Thus  $f(p_i)=f(q_i)$ for all $i$ by the separation property of $\rho_1, \ldots, \rho_k$, and therefore, $f(p)=f(q)$. Thus $f$ is basic with respect to $\hat{\rho}$, that is, $f\in\hat{A}$, and this shows that $A\subset \hat{A}$.

For the reverse inclusion, let $f\in \hat{A}$. Consider $P\in A$ given by the sum of the squares of all $m\times m$ minors of the Gram matrix $\langle\nabla\rho_i, \nabla\rho_j\rangle$. Note that $P\neq 0$ and that $P=0$ on $M\setminus M_{\reg}$. Then, $P \cdot(Q\circ\rho) f=0$ on $M\setminus M_0$ (recall that the polynomial $Q$ was produced by \cref{L:stratification}), and, since $\hat{\rho}$ and $\rho$ have the same fibers on $M\setminus M_0$, and $P\cdot (Q\circ\rho)\in A$, we have that $P\cdot (Q\circ\rho)\cdot f$ is constant on the common level sets of $A$. But \cref{maximal} states that $A$ is maximal, and therefore $P\cdot (Q\circ\rho)\cdot f\in A$. In particular, this shows that $f\in F(A)$, where $F(A)$ denotes the field of fractions of $A$. Using \cref{FF}, we obtain that $f\in A$. Therefore $\hat{A}\subset A$.

This finishes the proof of \cref{T:fromlaplacian}.


\section{Normal homogeneous spaces and the proofs of \cref{normal} and \cref{basicmean}}

The following is key to the proof of \cref{basicmean}, and its use is the main novelty in the present paper. It follows in a relatively straight-forward way from the theory laid out in \cite{HLO06} (initiated earlier by Terng and Thorbergsson, among others, see \cite{TT95}), and we believe it may be of independent interest.
\begin{theorem}
\label{shape}
Let $M$ be a compact normal homogeneous space, $L\subset M$ a smooth submanifold, $p\in L$ a point on $L$, and $v\in\nu_p L$ a unit normal vector. Denote by
\[ \cdots \leq l_2^- \leq l_1^- < 0 < l_1^+ \leq l_2^+ \leq \cdots\]
the focal lengths (repeated with appropriate multiplicities) of $L$ in the direction of $v$. Then the trace of the  shape operator $A_v$ of $L$ in the direction of $v$ is
\[\operatorname{tr}(A_v)=\sum_{k=1}^\infty \left(\frac{1}{l_k^-} + \frac{1}{l_k^+}\right). \]
\end{theorem}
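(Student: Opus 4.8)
The plan is to lift the entire problem to an infinite-dimensional \emph{flat} space, where focal distances are literally reciprocals of the shape operator's eigenvalues, and then to use the minimality of the fibers of the lifting submersion to match traces. Recall first the flat model: if $\hat L$ is a submanifold of a Hilbert space $V$ and $\hat v$ a unit normal at $\hat p$, then the normal geodesic $\hat\gamma(t)=\exp_{\hat p}(t\hat v)=\hat p+t\hat v$ is an affine line, so the curvature term in the Jacobi equation vanishes, and $\hat\gamma(t)$ is $\hat L$-focal exactly when $1/t$ is an eigenvalue of the shape operator $A_{\hat v}$. Thus the focal distances of $\hat L$ in the direction $\hat v$ are the reciprocals of the nonzero eigenvalues of $A_{\hat v}$ (counted with multiplicity), and, in the regularized sense appropriate to this setting, $\operatorname{tr}(A_{\hat v})=\sum_k 1/l_k$.

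I would then set things up as follows. Using \cite{HLO06} (building on \cite{TT95}), one obtains a Riemannian submersion $\Pi\colon V\to M$ from a separable Hilbert space whose fibers are minimal in the regularized sense. Put $\hat L=\Pi^{-1}(L)$, choose $\hat p\in\Pi^{-1}(p)$, and let $\hat v\in\nu_{\hat p}\hat L$ be the horizontal lift of $v$, so that $\hat v$ is horizontal and $d\Pi_{\hat p}\hat v=v$. The two facts to establish are: (i) the focal data of $\hat L$ along $\hat\gamma$ coincide with the focal data of $L$ along $\gamma(t)=\exp_p(tv)$, which follows from the fact that horizontal geodesics project to geodesics and $\hat\gamma$ is the horizontal lift of $\gamma$; and (ii) the trace identity $\operatorname{tr}(A^{\hat L}_{\hat v})=\operatorname{tr}(A^{L}_{v})$.

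For (ii) I would split $T_{\hat p}\hat L=\mathcal H\oplus\mathcal V$ into horizontal and vertical parts, where $\mathcal H\cong T_pL$ via $d\Pi$ and $\mathcal V=T_{\hat p}F$ is tangent to the fiber $F=\Pi^{-1}(p)\subset\hat L$. Since the trace ignores the off-diagonal blocks, $\operatorname{tr}(A^{\hat L}_{\hat v})=\operatorname{tr}(A^{\hat L}_{\hat v}|_{\mathcal H})+\operatorname{tr}(A^{\hat L}_{\hat v}|_{\mathcal V})$. On $\mathcal H$, because $\hat v$ is horizontal the vertical O'Neill correction drops out upon pairing against $\hat v$, giving $\langle\mathrm{II}^{\hat L}(X,Y),\hat v\rangle=\langle\mathrm{II}^{L}(d\Pi X,d\Pi Y),v\rangle$, so $A^{\hat L}_{\hat v}|_{\mathcal H}$ is conjugate to $A^{L}_{v}$ and has the same trace. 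On $\mathcal V$, since $\nu_{\hat p}\hat L\subset\nu_{\hat p}F$, the restriction $A^{\hat L}_{\hat v}|_{\mathcal V}$ is exactly the shape operator of the fiber $F$ in the direction $\hat v$, whose regularized trace vanishes by minimality. Combining (i) and (ii) with the flat-model computation yields $\operatorname{tr}(A^{L}_{v})=\operatorname{tr}(A^{\hat L}_{\hat v})=\sum_k\bigl(1/l_k^- + 1/l_k^+\bigr)$.

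The main obstacle lies entirely in the infinite-dimensional bookkeeping: the operator $A^{\hat L}_{\hat v}$ acts on an infinite-dimensional space, so ``trace,'' ``eigenvalues are reciprocals of focal distances,'' and ``minimal fiber'' must all be read in the regularized sense supplied by the theory of \cite{HLO06, TT95}. In particular one must know that these shape operators are compact with discrete spectrum accumulating only at $0$ (so that the focal distances accumulate only at $\pm\infty$), and that the regularization genuinely forces the vertical trace to be zero while leaving a convergent symmetric sum over the \emph{paired} distances $l_k^\mp$. This convergence is precisely what dictates the symmetric pairing in the statement and is governed by a cotangent-type partial-fraction (Euler) identity; pinning down these regularized statements and citing the exact form in which \cite{HLO06, TT95} guarantee minimality of the fibers and compactness of the shape operators is the crux of the argument.
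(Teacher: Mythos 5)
Your proposal follows the same route as the paper: lift $L$ to $\hat L=\Pi^{-1}(L)$ under the Riemannian submersion $V\to M$ from a separable Hilbert space with (regularized-)minimal fibers constructed in \cite{HLO06}, use invariance of the focal data under the lift, read off the eigenvalues of $A_{\hat v}$ as reciprocals of focal distances in the flat space, and match $\operatorname{tr}(A_v)$ with the regularized trace $\operatorname{tr}_r(A_{\hat v})$. The only difference is that you spell out the horizontal/vertical block decomposition behind the trace identity, whereas the paper delegates that step (together with regularizability, which settles the convergence issues you flag) to \cite[Lemma 5.2]{HLO06}.
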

\begin{proof}
By \cite[page 18]{HLO06}, there is a Riemannian submersion $\pi:V\to M$ with minimal fibers, where $V$ is a separable Hilbert space.

Let $\hat{L}=\pi^{-1}(L)$, choose $\hat{p}\in\hat{L}$, and let $\hat{v}\in T_{\hat{p}} \hat{L}$ be the horizontal lift of $v$. 
By \cite[Lemma 6.1]{HLO06},  the focal lengths of $\hat{L}$ in the direction of $\hat{v}$ are the same as for $L$, that is:
\[ \cdots \leq l_2^- \leq l_1^- < 0 < l_1^+ \leq l_2^+ \leq \cdots\]

Since $V$ is a Hilbert space, the shape operator $A_{\hat{v}}$ of $\hat{L}$ in the direction of $\hat{v}$ has eigenvalues
\[\frac{1}{ l_1^-} \leq \frac{1}{l_2^-}\leq \cdots < 0 < \cdots \leq \frac{1}{l_2^+} \leq \frac{1}{l_1^+}\]
see \cite[page 26]{HLO06}.

By \cite[Lemma 5.2]{HLO06}, $\hat{L}$ is regularizable. By definitions in \cite[page 13]{HLO06}, this implies that the series
\[\sum_{k=1}^\infty \left(\frac{1}{l_k^-} + \frac{1}{l_k^+}\right)\]
converges to the ``regularized trace'' of $A_{\hat{v}}$, denoted $\operatorname{tr}_r(A_{\hat{v}})$.

By \cite[Lemma 5.2]{HLO06} (or rather, its proof), we have $\operatorname{tr}(A_v)=\operatorname{tr}_r(A_{\hat{v}})$. Therefore
\[\operatorname{tr}(A_v)=\sum_{k=1}^\infty \left(\frac{1}{l_k^-} + \frac{1}{l_k^+}\right). \]
\end{proof}

\begin{example}\label{E:Euler}
Let $M=S^2$, and $L$ a latitude circle with distance $\varphi\in(0,\pi)$ from the north pole, and let $v$ be a unit normal vector pointing towards the north pole. Then the shape operator $A_v$, and its trace, are equal to $\cot(\varphi)$. The positive focal distances are 
\[\varphi,\ \varphi+\pi,\ \varphi+2\pi, \ldots\]
and the negative focal distances are
\[\ldots,\ \varphi-2\pi,\ \varphi-\pi \]
Thus \cref{shape} states that
\[\cot(\varphi)=\sum_{k=0}^\infty \left( \frac{1}{\varphi+k\pi} + \frac{1}{\varphi-k\pi-\pi} \right)\]
which can be rewritten as
\[\cot(\varphi)=\lim_{N\to\infty}\sum_{n=-N}^N\frac{1}{\varphi+n\pi}\]
This is a well-known identity due to Euler, see \cite[chapter 20]{AZproofs}. See also \cite{wiki}, section called ``Partial fraction expansion''.
\end{example}

\begin{lemma}\label{basicfocaldata}
Let $M$ be a complete Riemannian manifold, and $\sigma\colon M \to X$ a manifold submetry. Then the focal lengths of  fibers of maximal dimension are ``basic''. That is, given a fiber $L=\sigma^{-1}(x)$ of maximal dimension, the following is true. Let $p_i\in L$ and $v_i\in \nu_{p_i}L$, for $i=1,2$, with $d\sigma(v_1)=d\sigma(v_2)$. Then the focal lengths (with multiplicities) of $L$ in the direction of $v_1$ are the same as those in the direction of $v_2$.
\end{lemma}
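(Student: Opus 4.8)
The plan is to translate the statement about focal lengths into a statement about the zeros of $L$-Jacobi fields, and then to exploit the foliation structure along the two normal geodesics via Wilking's transverse Jacobi equation. First I would normalize $v_1,v_2$ to be unit and let $\gamma_i$ be the geodesic with $\gamma_i(0)=p_i$ and $\gamma_i'(0)=v_i$; since $L=\sigma^{-1}(x)$ is a fiber of maximal dimension, each $\gamma_i$ is horizontal, and the hypothesis $d\sigma(v_1)=d\sigma(v_2)$ means precisely that $\gamma_1$ and $\gamma_2$ project to the same unit-speed geodesic $\bar\gamma$ in the regular part of the base $X$. Recall that the focal lengths of $L$ along $\gamma_i$, counted with multiplicity, are exactly the parameters $t$ at which some nonzero $L$-Jacobi field vanishes, where an $L$-Jacobi field is a Jacobi field $J$ with $J(0)\in T_{p_i}L$ and tangential part $(\nabla_t J(0))^\top=-S_{v_i}(J(0))$ prescribed by the shape operator. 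So it suffices to produce a multiplicity-preserving correspondence between the zeros of $L$-Jacobi fields along $\gamma_1$ and along $\gamma_2$.

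Next I would introduce, along each $\gamma_i$, the space $\Lambda_i$ of holonomy fields of the foliation, i.e.\ the Jacobi fields tangent to the fibers $\sigma^{-1}(\sigma(\gamma_i(t)))$. These are exactly the $L$-Jacobi fields determined by their initial value $J(0)\in T_{p_i}L$ together with horizontality; they form a self-adjoint family of dimension $\dim L$, and at regular parameters their evaluation map is an isomorphism onto the vertical space. The main tool is Wilking's transverse Jacobi equation applied to $\Lambda_i$: it produces a modified curvature operator $R^{\Lambda_i}$ on the horizontal bundle transverse to $\gamma_i'$, and the content I want to extract is that, under the identification of transverse spaces with $T\bar\gamma^\perp\subset TX$ furnished by the Riemannian submersion $d\sigma$, the operator $R^{\Lambda_i}$ coincides with the base curvature operator $\bar R(\cdot,\bar\gamma')\bar\gamma'$ along $\bar\gamma$. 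Since $\gamma_1$ and $\gamma_2$ cover the same $\bar\gamma$, this exhibits $R^{\Lambda_1}$ and $R^{\Lambda_2}$ as intertwined by a parallel isometry over $\bar\gamma$.

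With this in hand I would split the focal points. Writing an $L$-Jacobi field as $J=J_\Lambda+\tilde J$ with $J_\Lambda\in\Lambda_i$ and $\tilde J(0)=0$, the condition $J(t_0)=0$ projects to $\bar J(t_0)=0$ for the transverse field $\bar J=d\sigma(\tilde J)$ downstairs, because $J_\Lambda$ is vertical. Thus the focal points fall into two families, both governed by $\bar\gamma$ alone: the \emph{holonomy} focal points, where the evaluation of $\Lambda_i$ drops rank, which occur precisely at the parameters where $\bar\gamma$ meets the singular set of $X$ and with multiplicity equal to the drop in fiber dimension; and the \emph{transverse} focal points, which project to conjugate points of $\bar\gamma(0)$ along $\bar\gamma$, governed by $R^{\Lambda_i}\cong\bar R$. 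Since both families, with their multiplicities, depend only on $\bar\gamma$, the focal multisets for $\gamma_1$ and $\gamma_2$ agree, which is the claim.

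The hard part will be the multiplicity bookkeeping, and in particular the mixing between the two families: a transverse field with $\bar J\equiv 0$ but $\tilde J\neq 0$ can still contribute, and near the singular parameters of $\bar\gamma$ the evaluation of $\Lambda_i$ degenerates and the naive submersion picture breaks down. Making the correspondence multiplicity-exact through these parameters is exactly what Wilking's transverse Jacobi equation is designed for: by descending to the genuine Jacobi equation carried by $R^{\Lambda_i}$ on the quotient bundle $\Lambda_i^\perp/\Lambda_i$ and tracking the kernel of the evaluation, one converts the ambiguous upstairs count into a manifestly basic downstairs count. I would therefore spend the bulk of the argument verifying that $R^{\Lambda_i}$ is basic and that this descent is compatible with the initial condition coming from $T_{p_i}L$, so that the resulting bijection between upstairs focal data and the base-determined conjugate-point-plus-singular-set data is exact.
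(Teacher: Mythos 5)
Your proposal is correct and follows essentially the same route as the paper: the paper's proof is a one-line citation to \cite[Proposition 17]{MR20} and \cite[Proposition 3.1]{AR15}, and those arguments (as the Introduction's overview also indicates) are precisely the one you sketch --- splitting the $L$-Jacobi fields along the holonomy (leaf-tangent) subfamily, applying Wilking's transverse Jacobi equation to see that the reduced data are determined by the projected geodesic, and accounting for focal multiplicities as rank drops of the holonomy family plus focal points of the reduced Lagrangian. The multiplicity bookkeeping you flag as the hard part is exactly the content carried out in those references, so nothing essential is missing.
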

\begin{proof}
This is essentially the same proof as \cite[Proposition 17]{MR20} and \cite[Proposition 3.1]{AR15}.
\end{proof}

\begin{proof}[Proof of  \cref{basicmean}]
Follows immediately from \cref{shape} and \cref{basicfocaldata}.
\end{proof}

\begin{proof}[Proof of  \cref{normal}]
Follows immediately from \cref{basicmean}, \cref{thmalgebraic}, and \cref{1-1correspondence}.
\end{proof}

%


\bibliography{ref}
\bibliographystyle{alpha}

\end{document}